\newtheorem{thm}{Theorem}
\newtheorem{lem}{Lemma}[section]
\newtheorem{cor}[lem]{Corollary}
\theoremstyle{remark}
\newtheorem{expl}{Example}[section]
\newtheorem{rem}[expl]{Remark}
\def\D{\mathrm D}
\def\i{\mathrm i}
\def\d{\mathrm d}
\def\R{\mathbb R}
\def\C{\mathbb C}
\DeclareMathOperator{\diag}{diag}
\DeclareMathOperator*{\slim}{s-lim}
\let\Im\relax
\let\Re\relax
\DeclareMathOperator{\Im}{Im}
\DeclareMathOperator{\Re}{Re}
\title{$C^m$-theory of damped wave equations with stabilisation}
\author{Fumihiko Hirosawa}
\address{F. Hirosawa, Department of Mathematical Sciences, Yamaguchi University, 753-8512, JAPAN}
\email{hirosawa@yamaguchi-u.ac.jp}
\thanks{The first author is supported by Grant-in-Aid for Young Scientists (B) 19740072 from the Ministry of Education, Science, Sports and Culture of Japan.}
\author{Jens Wirth}
\address{J. Wirth, Department of Mathematics, Imperial College, London SW7 2AZ, UK}
\email{j.wirth@imperial.ac.uk}
\thanks{The second author is supported by EPSRC grant EP/E062873/1}
\begin{document}

\begin{abstract}
The aim of this note is to extend the energy decay estimates from [J. Wirth, {\em Wave equations with time-dependent-dissipation I. Non-effective dissipation.}, J. Differential Equations  222 (2006) 487--514] 
to a broader class of time-dependent dissipation including very fast oscillations. This is achieved
using stabilisation conditions on the coefficient in the spirit of [F. Hirosawa, {\em On the asymptotic behavior of the energy for wave equations with time-depending coefficients}, Math. Ann.  339/4 (2007) 819--839].
\end{abstract}

\maketitle

\section{The problem under investigation}
We want to investigate the Cauchy problem 
$$ \square u+2b(t)u_t=0,\qquad u(0,\cdot)=u_1,\quad \D_t u(0,\cdot)=u_2 $$
for a weakly damped wave equation with time-dependent dissipation, as usual $\square = \partial_t^2-\Delta$ denotes the d'Alembertian and $\D=-\i\partial$. For this we apply a partial 
Fourier transform to get the ordinary differential equation
$$ \hat u_{tt}+|\xi|^2\hat u+2b(t)\hat u_t = 0 $$
parameterised by the frequency $\xi$. To formulate a first order system corresponding to this
second order equation we consider $V=(|\xi|\hat u,\D_t \hat u)^T$, such that
$$ \D_t V = \begin{pmatrix} & |\xi| \\ |\xi| & 2\i b(t) \end{pmatrix} V = A(t,\xi) V. $$
We denote its fundamental by $\mathcal E(t,s,\xi)$, i.e.
$$ \D_t \mathcal E(t,s,\xi) = A(t,\xi)\mathcal E(t,s,\xi),\qquad \mathcal E(s,s,\xi)=I\in\mathbb C^{2\times 2}. $$
Our aim is to understand the structure of $\mathcal E(t,s,\xi)$ and its asymptotic behaviour as $t\to\infty$
in dependence on the coefficient function $b=b(t)$ extending results from \cite{Wirth06}.  
If $b(t)\equiv 0$ vanishes identically, we denote the fundamental solution as $\mathcal E_0(t,s,\xi)$ and refer to it as free solution.

There is a very strong interrelation between properties of the coefficient function $b=b(t)$ and decay
properties of solutions to the above Cauchy problem. We refer to \cite{Wirth05}, \cite{Wirth06}, \cite{Wirth:2007} for an overview of related results, which is complete at least for monotonous coefficients and provides sharp
decay results for solutions. However, if the coefficient functions are allowed to bear a certain amount of
oscillations, results may change dramatically. For the case of variable propagation speed this may even lead to exponentially growing energy, as pointed out in \cite{Yagdjian:2001} or, at least destroy the structure of decay results, \cite{Reissig:1999}, \cite{Yagdjian:2000a}, \cite{Reissig:2005}. Recently, the first author developed a technique to obtain
positive results for similar situations with strong oscillations by using a refined diagonalisation technique and a so-called stabilisation condition on coefficients,  \cite{Hirosawa06}. The aim of this note is to extend this technique to the situation of oscillations in lower order terms, especially oscillations in dissipation terms. 

This note is organised as follows. At first we will introduce in Section~\ref{sec2} the assumptions
we impose on the coefficient function $b=b(t)$; particular examples of admissible coefficients are given in Section~\ref{sec3}. The construction of $\mathcal E(t,s,\xi)$ and will be done in Section~\ref{sec4},
where we introduce zones and give precise information on the structure of the fundamental solution
depending on corresponding areas of the phase space. Finally, Section~\ref{sec5} collects the main results of this note. We present two theorems describing sharp energy decay results for solutions of
the above introduced Cauchy problem.

Throughout these notes we denote by $c$ or $C$ various constants which may change from line to line. Furthermore, $f(p)\lesssim g(p)$ for two positive functions means that there exists a constant such that
$f(p)\le Cg(p)$ for all values of the parameter $p$. Similarly, $f(p)\gtrsim g(p)$ means $g(p)\lesssim f(p)$
and $f(p)\approx g(p)$ stands for $f(p)\lesssim g(p)$ and $g(p)\lesssim f(p)$. For a matrix $A$ we denote by $||A||$ its spectral norm, while $|A|$ stands for the matrix composed of the absolute values
of the entries.

\section{Assumptions}\label{sec2}
Tools used in the approach are closely related to conditions on the coefficients. We impose that the dissipation can be written as
$$2b(t)=\mu(t)+\sigma(t),$$
where functions $\mu(t)$ and $\sigma(t)$ carry different kind of information: $\mu(t)$ will determine
the {\em shape} of the coefficient, while $\sigma(t)$ contains {\em oscillations} (and zero mean in a certain sense). In detail our assumptions are
\begin{description}
\item[(1)] $\mu(t)>0$, $\mu'(t)<0$ and $\limsup\limits_{t\to\infty}t\mu(t)<1$;
\item[(2)]  {\em generalised zero mean condition}
$$ \sup_t \left|\int_0^t \sigma(s)\d s\right| <\infty; $$
\item[(3)] {\em stabilisation condition}
$$ \int_0^t \left| \exp\left(\int_0^\theta\sigma(s)\d s\right) -\omega_\infty \right|\d \theta \lesssim \Theta(t)=o(t) $$
with a suitable (uniquely determined) $\omega_\infty>0$ (and a function $\Theta(t)$ normalised such that $\Theta(0)=\mu(0)$);
\item[(4)] {\em symbol-like conditions} for derivatives
$$ \left|\frac{\d^k}{\d t^k} b(t)\right| \le C_k  \Xi(t)^{-k-1},\qquad k=1,2,\dots,m $$
with $\Xi(t) \gtrsim \Theta(t)$;
\item[(5)] together with the {\em compatibility condition} 
$$   \int_t^\infty \Xi(s)^{-m-1}\d s\lesssim \Theta(t)^{-m} $$
between (3) and (4). 
\end{description}

Let's explain the philosophy behind the conditions (1) to (5): We assume that the reader is familiar with
\cite{Wirth06} and/or \cite{Hirosawa06}. Since $\mu(t)$ should describe the shape of the coefficient, the assumptions of (1) are related to \cite{Wirth05}, \cite{Wirth06}. The limit conditions excludes the exceptional case from \cite{Wirth04}, where a structural change in the representation of solutions occurrs. Condition (2) describes that $\sigma(t)$ contains oscillations and the integral assumption implies a zero mean condition of $\sigma(t)$. The stabilisation condition (3) is related to \cite{Hirosawa06} and \cite{HW} (after a Liouville type
transformation of variables). Note that (2) implies $\Theta(t)=\mathcal O(t)$, 
the stabilisation improves this trivial estimate. We will use the notation
$$ f(t) \rightsquigarrow \alpha \qquad\text{if and only if}\qquad \int_0^t |f(s)-\alpha|\d s = o(t) $$
for stabilising functions. Some elementary properties are given later on. The symbol-like estimates
of assumption (4) are thought to be weaker than the ones from \cite{Wirth05}, \cite{Wirth06}, 
where $\Xi(t)=(1+t)$ was used. Stabilisation allows to use weaker assumptions on derivatives by
shrinking the hyperbolic zone to
$$ Z_{hyp}(N) = \{ (t,\xi)\;|\; \Theta(t)|\xi| \ge N\}. $$
We pay for this by using {\em more} steps of diagonalisation. The number of steps for diagonalisation 
will be the number $m$ from condition (5). It implies that remainder terms are uniformly integrable over
the hyperbolic zone after applying $m$ steps.

\begin{rem}
According to Appendix A.6 and under assumption (2) the condition (3) holds if and only if $\int_0^t \sigma(s)\d s\rightsquigarrow \log\omega_\infty$. We will exploit this fact in the examples stated below.
\end{rem}

\section{Examples}\label{sec3}
We will collect some examples to illustrate the nature of our assumptions.

\begin{expl}\label{expl31}
 First we set
$\mu(t)=\frac{\mu}{1+t}$ with a fixed constant $\mu\in(0,1/2)$. Then (1) is fulfilled. 
Furthermore, $\sigma(t) = \mu(t)\sin(t^\alpha)$ satisfies (2) to (5). Indeed, (2) follows from
$$ \int_0^t \frac{\sin(s^\alpha)}{1+s} \d s = \frac 1\alpha \int_0^{t^\alpha} \frac{\sin\theta}{\theta^{(\alpha-1)/\alpha}+\theta}\d\theta \approx \int^{t^\alpha} \frac{\sin\theta}{1+\theta}\d\theta \approx \mathrm{Si}(t^\alpha) = \mathcal O(1), $$
while for (3) we use that the above integral converges by Leibniz criterium such that with 
$\omega_\infty = \exp(\int_0^\infty \frac{\sin(s^\alpha)}{1+s} \d s )>0$ 
\begin{align*}
\int_{t_0}^t& \left| \exp(\int_0^\theta \frac{\sin(s^\alpha)}{1+s} \d s) - \omega_\infty\right|\d \theta
= \omega_\infty \int_{t_0}^t \left|\exp(\int_{\theta}^\infty \frac{\sin(s^\alpha)}{1+s} \d s)-1\right|\d \theta
\\&\lesssim \int_{t_0}^t \left| \int_\theta^\infty \frac{\sin(s^\alpha)}{1+s} \d s\right| \d\theta
\approx \int_{t_0}^t \left| \int_{\theta^\alpha}^\infty \frac{\sin(s)}{1+s} \d s\right| \d\theta \le
\int_{t_0}^t \theta^{-\alpha}\d\theta \\
&\approx t^{1-\alpha}=\Theta(t)=o(t)
\end{align*}
for $\alpha\in(0,1)$.  
Furthermore, $| \d_t^k \sin(t^\alpha)/(1+t) | \le C_k (1+t)^{-1-k(1-\alpha)}$.
Condition (4) and (5) are satisfied for $m=1$ if we take $\Xi(t)=(1+t)^{1-\alpha/2}$.
\end{expl}

\begin{expl}
Let $\mu(t)=\frac{\mu}{1+t}$.
If we consider $\sigma(t)=\mu(t)\sin(t/\log(e+t))$ we find (similarly) that (1) to (3) are satisfied
with $\Theta(t)=\log^2(e+t)$. Derivatives satisfy $|\d^k_t \sin(t/\log(e+t))|\le C_k (1+t)^{-1}\log(e+t)^{-k}$.
We can not satisfy (4) and (5). 
\end{expl}

\begin{expl}
For $\sigma(t)=(1+t)^{-\beta}\sin(t^\alpha)$ assumptions (2) and (3) are satsified if $\alpha,\beta>0$ and $1<\alpha+\beta<2$. We get $\Theta(t)=t^{2-\alpha-\beta}$ and assumptions (4) and (5) can hold only if $\beta\ge1$. Combining this with $\mu(t)=\frac{\mu}{1+t}$, $\mu<1/2$, we can choose $m=1$ and $\Xi(t)=t^{(1+\beta)/2-\alpha/2}$.
\end{expl}

\begin{expl}\label{expl34}
If we consider $\mu(t) = \frac1{(1+t)\log(e+t)}$, then the function $\sigma(t)=\mu(t)\sin(t/\log(e+t))$ 
may be chosen. In this case $\Theta(t)=\log(e+t)$. For (4) and (5) we choose $m=1$ and 
 $\Xi(t)=\sqrt{1+t}\log(e+t)$,
$$ \int_t^\infty \frac1{s\log^2s}\d s\approx \frac1{\log t}. $$
\end{expl}

\begin{expl}\label{expl35}
We give a further example of  a coefficient function with higher $m$.  Let $\chi\in C_0^\infty(\R)$
with $\mathrm{supp}\,\chi=[-1,1]$, $|\chi(t)|<1$ and $\int_{-1}^1 \chi(t)\d t=0$. For given sequences $t_j\ge1 $ and $\delta_j\le 1$, $j=1,2,\dots$, of positive real numbers with $t_j+\delta_j^{-1}\le t_{j+1}-\delta_{j+1}^{-1}$ and a suitably chosen real number $\gamma>0$ we define
$$
\sigma(t) = \begin{cases} t_j^{-\gamma} \chi(\delta_j (t-t_j)),\qquad &t\in I_j=[t_j-\delta_j^{-1},t_j+\delta_j^{-1}],\\ 0, &t\not\in \bigcup_j I_j.\end{cases}
$$
Then condition (2) holds because $|\int_0^t\sigma(s)\d s| \le 2$. For condition (3)  we set $\omega_\infty = 1$ and estimate the integral as
$$ \int_0^t \left| \int_0^\tau\sigma(s) \d s \right| \d \tau \le \sum_{j=1}^n 2 = 2n $$
for $t\in[t_n-\delta_n^{-1},t_{n+1}-\delta_{n+1}^{-1}]$.
Thus with $t_n=n^\alpha$ assumption (3) holds for $2n = 2t_n^{1/\alpha}\approx t^{1/\alpha}=\Theta(t)=o(t)$,
i.e. if $\alpha>1$. We set $\delta_n=t_n^{-\gamma}=n^{-\alpha\gamma}$. Then $n^{\alpha}+n^{\alpha\gamma} \le  (n+1)^\alpha-(n+1)^{\alpha\gamma}$ holds true for large $n$ if $\gamma<1$.
Derivatives satisfy $|\d_t^k\mu(t)\sigma(t)|\lesssim  (1+t_j)^{-\gamma(k+1)} \approx (1+t)^{-\gamma(k+1)}$, thus (4) holds with $\Xi(t) = (1+t)^{\gamma}$. Now we choose $\gamma$ such that (5)  
holds for a given number $m$, thus 
$$ \int_t^\infty (1+\tau)^{-(m+1)\gamma} \d\tau \lesssim t^{-m/\alpha},$$
i.e. $(m+1)\gamma>1$ and $(m+1)\gamma-1=m/\alpha$. Hence we choose 
$\gamma=\frac1{m+1}+\frac m{\alpha(m+1)}$, which is smaller than 1 for all $m=1,2,\dots$. 
The function $\mu(t)$ may be chosen as in Example~\ref{expl31} or~\ref{expl34}. 

This example shows that for any given number $m$ and any stabilisation rate $\Theta(t)=t^{1/\alpha}$,
$\alpha>1$, we find a coefficient $2b(t)=\mu(t)+\sigma(t)$ subject to (2)--(5).
\end{expl}

\begin{rem}
The results derived later on will show that essential influence on decay properties of solutions comes from the shape function $\mu(t)$, while the `perturbation' $\sigma(t)$ has no influence on the decay rate
at all. Note, that we do {\em not} require that $\sigma(t)$ is small corresponding to $\mu(t)$ in some $L^\infty$ sense. We only require, that the oscillations contained in $\sigma(t)$ are 'neatly arranged'.
\end{rem}

\section{Construction of the fundamental solution $\mathcal E(t,s,\xi)$}\label{sec4}
Main point of our concern is how to use assumption (3) for small frequencies. We will introduce a bit more of notation and denote $\mathcal E_\mu(t,s,\xi)$ the fundamental solution for the case 
$\sigma(t)\equiv0$, 
$$\D_t\mathcal E_\mu(t,s,\xi)=A_\mu(t,\xi)\mathcal E_\mu(t,s,\xi),\qquad \mathcal E_\mu(s,s,\xi)=I.$$ Properties of 
$\mathcal E_\mu(t,s,\xi)$ are studied in \cite{Wirth05}, \cite{Wirth06} (as low-regularity theory, i.e. without using symbol classes and further steps of diagonalisation). The basic behaviour of $\mathcal E_\mu(t,0,\xi)$ can be summarised as follows: The phase space can be decomposed into three parts (two zones
and subzones), 
\begin{itemize}
\item the {\em dissipative zone} $Z_{diss}^{(\mu)}(N) = \{ (t,\xi)\;|\; |\xi| \le N \mu(t)\}$ and
\item the {\em hyperbolic zone}  $Z_{hyp}^{(\mu)}(N) = \{ (t,\xi)\;|\; |\xi|\ge N\mu(t)\}$, divided into
the regions where $|\xi|\ge N\mu(0)$ and where $N\mu(t)\le |\xi|\le N\mu(0)$.
\end{itemize}
The latter subdivision is merely for convenience and does not stand for any deep structural differences
of the fundamental solution. The subdivision into zones is {\em essential} as the following results show.
The constant $N$ does not matter in this case. In the hyperbolic zone the fundamental solution behaves
like $\mathcal E_0(t,s,\xi)$ multiplied by $\lambda(s)/\lambda(t)$ and a uniformly bounded and invertible matrix. The function 
$$\lambda(t)=\exp\left(\frac12\int_0^t \mu(s)\d s\right) $$
contains (the essence of) the influence of dissipation. In contrast to that the fundamental solution $\mathcal E_\mu(t,0,\xi)$  behaves in the dissipative zone essentially like (see e.g.
\cite{Emmerling07} for a neat argument)
$$\diag\left(1,\frac{\lambda^2(0)}{\lambda^2(t)}\right).$$
This {\em bad} behaviour (bad in the sense that it destroys the energy estimate appearing naturally within the hyperbolic zone) has to be compensated by assumptions on the data. Our assumption (1)
on $\mu(t)$ implies
$$ \left\|\mathcal E_\mu(t,0,\xi) \diag(|\xi|/\langle\xi\rangle,1) \right\| \lesssim \frac1{\lambda(t)} $$
with $\langle\xi\rangle = \sqrt{1+|\xi|^2}$.

\subsection{Estimates in the dissipative zone} 
Following the argumentation of \cite{Emmerling07} or
\cite{Wirth06} we see that the fundamental solution $\mathcal E(t,0,\xi)$ satisfies within the 
dissipative zone $$Z_{diss}(N)=\{(t,\xi)\;|\; |\xi|\le N\mu(t)\}=Z_{diss}^{(\mu)}(N)$$ the same estimates
as sketched above for $\mathcal E_\mu(t,0,\xi)$. For later use we will denote the boundary
of the dissipative zone by $t_\xi^{(1)}$.

\begin{lem}\label{lem41}
The fundamental solution $\mathcal E(t,0,\xi)$ satisfies for all
$(t,\xi)\in Z_{diss}(N)$ the point-wise estimate 
$$
   |\mathcal E(t,0,\xi)|\lesssim \frac1{\lambda^2(t)} \begin{pmatrix}|\xi|^{-1} & 1 \\ |\xi|^{-1} & 1 \end{pmatrix}.
$$
\end{lem}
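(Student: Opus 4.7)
My plan is to integrate the first-order system $\partial_t V=\i A(t,\xi)V$ directly via an integrating factor and then close the resulting Volterra-type equation by a Gronwall argument, following the strategy used for $\mathcal E_\mu$ in \cite{Emmerling07,Wirth06}. The distinguishing feature here is that, thanks to assumption (2), the quantity $S(t):=\int_0^t\sigma(s)\,\d s$ is uniformly bounded, so $\sigma$ enters only through the uniformly bounded factors $e^{\pm S(t)}$, which can be absorbed into multiplicative constants throughout.

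Solving the second-component equation $\partial_tV_2+2b(t)V_2=\i|\xi|V_1$ with integrating factor $\exp(2\int_0^tb\,\d s)=\lambda^2(t)e^{S(t)}$ yields
$$V_2(t)=\lambda^{-2}(t)e^{-S(t)}V_2(0)+\i|\xi|\lambda^{-2}(t)e^{-S(t)}\int_0^t\lambda^2(s)e^{S(s)}V_1(s)\,\d s,$$
and substituting into $V_1(t)=V_1(0)+\i|\xi|\int_0^tV_2(s)\,\d s$ (with Fubini) produces a Volterra-type equation
$$V_1(t)=V_1(0)+\i|\xi|V_2(0)I(t)-|\xi|^2\int_0^tK(r,t)V_1(r)\,\d r$$
with $I(t):=\int_0^t\lambda^{-2}(s)e^{-S(s)}\,\d s$ and $K(r,t):=\lambda^2(r)e^{S(r)}\int_r^t\lambda^{-2}(s)e^{-S(s)}\,\d s$. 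The decisive analytic step is then the uniform kernel bound $|\xi|^2\int_0^tK(r,t)\,\d r\lesssim 1$ on $Z_{diss}(N)$; after discarding the bounded $e^{\pm S}$-factors and exchanging the order of integration this reduces to $\int_0^s\lambda^2(r)\,\d r\lesssim\lambda^2(s)/\mu(s)$ and $\int_0^t1/\mu(s)\,\d s\lesssim 1/\mu(t)^2$, both consequences of the monotonicity of $\mu$ and $\limsup_{t\to\infty}t\mu(t)<1$ in (1), combined with $|\xi|\le N\mu(t)$. A standard Volterra--Gronwall iteration then yields $|V_1(t)|\lesssim |V_1(0)|+|\xi||V_2(0)|I(t)$.

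Two further pointwise estimates valid on $Z_{diss}(N)$ convert this into the claimed decay: since $\mu(t)\lambda^2(t)=(\lambda^2)'(t)$ is uniformly bounded under (1), one has $|\xi|\lambda^2(t)\lesssim 1$ so that $|V_1(0)|\lesssim\lambda^{-2}(t)|V_1(0)|/|\xi|$; and $|\xi|I(t)\lesssim|\xi|\lambda^{-2}(t)/\mu(t)\lesssim\lambda^{-2}(t)$ so that $|\xi||V_2(0)|I(t)\lesssim\lambda^{-2}(t)|V_2(0)|$. Combining gives $|V_1(t)|\lesssim\lambda^{-2}(t)(|V_1(0)|/|\xi|+|V_2(0)|)$, and feeding this back into the representation of $V_2$ (together with $|\xi|t\lesssim 1$ on $Z_{diss}(N)$, again a consequence of (1)) yields the same bound for $V_2$. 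Reading these two estimates off column by column gives exactly the entrywise inequality for $\mathcal E(t,0,\xi)$ claimed in the lemma. The main technical obstacle lies in the careful verification of the pointwise bounds on $\lambda^2\int\lambda^{-2}$-quantities under (1) — essentially the same calculation underlying the corresponding $\mathcal E_\mu$-result — while assumption (2) plays only a bookkeeping role through the uniform boundedness of $e^{\pm S(t)}$.
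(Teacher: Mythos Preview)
Your proof is correct and follows essentially the same strategy as the paper: both rewrite the system as coupled integral equations via the integrating factor $\tilde\lambda^2(t)=\lambda^2(t)e^{S(t)}$, substitute one equation into the other to obtain a Volterra-type equation, bound the kernel in $L^1$ using assumption~(1) (with assumption~(2) playing only the bookkeeping role of making $e^{\pm S}$ harmless), and close by a Neumann-series argument. The only differences are cosmetic: the paper multiplies by the weight $|\xi|\tilde\lambda^2(t)$ \emph{before} iterating so that the source term is already bounded, whereas you iterate first and insert the weight afterward; and for the kernel bound the paper appeals directly to the monotonicity of $t/\lambda^2(t)$ together with $|\xi|t_\xi^{(1)}\lesssim 1$, while you route it through the pair $\int_0^s\lambda^2\lesssim\lambda^2(s)/\mu(s)$ and $\int_0^t\mu^{-1}\lesssim\mu(t)^{-2}$.
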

\begin{proof} The proof is mainly taken from \cite[Lemma 2.1]{Wirth:2007a}. We rewrite the system as
system of integral equations, denoting the entries of the rows of $\mathcal E(t,0,\xi)$ as $v(t,\xi)$ and $w(t,\xi)$. This gives
\begin{align*}
  v(t,\xi)&=\eta_1+\i|\xi|\int_0^t w(\tau,\xi)\d\tau,\\
  w(t,\xi)&=\frac1{\tilde\lambda^2(t)}\eta_2 +\i |\xi|\frac1{\tilde\lambda^2(t)}\int_0^t
  \tilde\lambda^2(\tau)v(\tau,\xi)\d\tau,
\end{align*} 
where $\tilde\lambda(t)=\exp(\int_0^t b(\tau)\d\tau)$ and $\eta=(\eta_1,\eta_2)=(1,0)$
or $\eta=(0,1)$ for the first and second column, respectively. 

We start by considering the first column. Plugging the second equation into the first one and interchanging the order of integration gives
\begin{align*}
 v(t,\xi) &= 1- |\xi|^2\int_0^t \frac1{\tilde\lambda^2(\tau)} \int_0^\tau  \tilde\lambda^2(\theta) v(\theta,\xi)\d \theta \d\tau\\
 &= 1- |\xi|^2\int_0^t \tilde\lambda^2(\theta) v(\theta,\xi)\int_\theta^t \frac{\d\tau}{\tilde\lambda^2(\tau)}\d\theta, 
 \end{align*}
such that $\tilde\lambda^2(t)|\xi|v(t,\xi)$ satisfies an Volterra integral equation
$$ |\xi|\tilde\lambda^2(t)v(t,\xi) = h(t,\xi) + \int_0^t k(t,\theta,\xi) |\xi|\tilde\lambda^2(\theta)v(\theta,\xi) \d\theta$$
 with kernel $k(t,\theta,\xi) = 
-|\xi|^2\tilde\lambda^2(t)\int_\theta^t \d\tau/\tilde\lambda^2(\tau)$ and source term $h(t,\xi)=|\xi|\tilde\lambda^2(t)$. Assumptions (1) and (2) imply $h(t,\xi)\lesssim 1$ uniformly on $Z_{diss}(N)$. Representing the solution as Neumann series,
$$
\tilde\lambda^2(t)|\xi|v(t,\xi) = h(t,\xi)+\sum_{\ell=1}^\infty \int_0^t k(t,t_1,\xi)\cdots\int_0^{t_{\ell-1}} k(t_{\ell-1},t_\ell,\xi) h(t_\ell,\xi)\d t_\ell\cdots\d t_1,
$$
implies the bound $\tilde\lambda^2(t)|\xi|v(t,\xi) \in L^\infty(Z_{diss}(N))$ following from the kernel estimate
\begin{align*}
\sup_{(t,\xi)\in Z_{diss}(N)}& \int_0^t \sup_{0\le \tilde t\le t_\xi^{(1)}} |k(\tilde t,\theta,\xi)|\d\theta
\lesssim \sup_{(t,\xi)\in Z_{diss}(N)} |\xi|^2 \lambda^2(t_\xi^{(1)}) \int_0^{t_\xi^{(1)}} \int_\theta^{t_\xi^{(1)}}
\frac{\d\tau}{\lambda^2(\tau)}\\ 
\\&= \sup_{(t,\xi)\in Z_{diss}(N)} |\xi|^2\lambda^2(t_\xi^{(1)}) \int_0^{t_\xi^{(1)}} \frac{\tau}{\lambda^2(\tau)}\d\tau \lesssim  (|\xi| t_\xi^{(1)}) ^2\lesssim1,
\end{align*}
based on $|k(t,\theta,\xi)|\approx |\xi|^2\lambda^2(t)\int_\theta^t \d\tau/\lambda^2(\tau)$ (by (2))
and the monotonicity of $t/\lambda^2(t)$ for large time. The second integral equation implies the
corresponding bound for $w(t,\xi)$,
$$
|\xi|\tilde\lambda^2(t) |w(t,\xi)|\le |\xi| \int_0^t|\xi|
  \tilde\lambda^2(\tau)|v(\tau,\xi)|\d\tau\lesssim |\xi| \int_0^t\d\tau\lesssim 1
$$
uniformly on $Z_{diss}(N)$.

For the second column we use the same idea: Plugging the second equation into the first one yields the new source term $|\xi|\int_0^t \d\tau/\tilde\lambda^2(\tau)\lesssim 1/\lambda^2(t)$. Therefore the representation as Neumann series yields $\lambda^2(t)v(t,\xi)\in L^\infty(Z_{diss}(N))$ and integration with the second integral equation gives consequently $\lambda^2(t)w(t,\xi)\in L^\infty(Z_{diss}(N))$.
The statement is proven.
\end{proof}
\begin{cor}\label{cor41a}
The fundamental solution $\mathcal E(t,0,\xi)$ satisfies within the dissipative zone 
$Z_{diss}(N)$ the norm-estimate
$$ \left\|\mathcal E(t,0,\xi) \diag(|\xi|/\langle\xi\rangle,1) \right\| \lesssim \frac1{\lambda^2(t)}. $$
\end{cor}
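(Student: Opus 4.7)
The plan is essentially to read off the corollary from the pointwise bound in Lemma~\ref{lem41} by a purely algebraic manipulation. First I would apply the lemma to obtain
$$|\mathcal E(t,0,\xi)|\lesssim \frac1{\lambda^2(t)} \begin{pmatrix}|\xi|^{-1} & 1 \\ |\xi|^{-1} & 1 \end{pmatrix}$$
on $Z_{diss}(N)$ and then right-multiply this entrywise estimate by $\diag(|\xi|/\langle\xi\rangle,1)$. The factor $|\xi|$ in the first column cancels the $|\xi|^{-1}$ from the lemma, so the resulting matrix of absolute values is controlled by
$$\frac1{\lambda^2(t)}\begin{pmatrix} 1/\langle\xi\rangle & 1\\ 1/\langle\xi\rangle & 1\end{pmatrix},$$
each entry of which is bounded by $C/\lambda^2(t)$.

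Next I would pass from entrywise to spectral-norm bounds. The standard inequality $\|A\|\le\||A|\|\le 2\max_{i,j}|a_{ij}|$ (or equivalently, domination by the Frobenius norm) yields $\|\mathcal E(t,0,\xi)\diag(|\xi|/\langle\xi\rangle,1)\|\lesssim 1/\lambda^2(t)$ on $Z_{diss}(N)$. Here I would briefly note that in the dissipative zone $|\xi|\le N\mu(t)\le N\mu(0)$, so $\langle\xi\rangle$ is uniformly bounded; this ensures the factor $1/\langle\xi\rangle$ plays no role and no additional smallness or largeness is hidden in the entries.

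I do not anticipate any real obstacle: the only thing to check is that no constants depend on $\xi$ or $t$ inside the dissipative zone, which is immediate from the uniform bound on $\langle\xi\rangle$ and the fact that the constants in Lemma~\ref{lem41} are already uniform on $Z_{diss}(N)$. Consequently the proof should amount to a couple of lines stating these two reductions.
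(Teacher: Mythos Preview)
Your proposal is correct and follows exactly the same route as the paper's proof: apply the entrywise bound of Lemma~\ref{lem41}, right-multiply by $\diag(|\xi|/\langle\xi\rangle,1)$ to cancel the $|\xi|^{-1}$ in the first column, and read off the norm estimate. One small remark: your observation that $\langle\xi\rangle$ is bounded above in $Z_{diss}(N)$ is not actually what you need---the relevant fact is simply $1/\langle\xi\rangle\le 1$, which holds everywhere since $\langle\xi\rangle\ge 1$; the paper uses precisely this.
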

\begin{proof}
  It remains to estimate the entries in the first column of the product on the right. Lemma~\ref{lem41}
  gives the uniform bound $(\lambda^2(t)|\xi||)^{-1} |\xi|/\langle\xi\rangle \lesssim 1/\lambda^2(t)$.
\end{proof}

\begin{rem} 
Later on we will only use the estimate $1/\lambda(t)$ within this zone. 
\end{rem}

\subsection{Diagonalisation -- Estimates in the hyperbolic zone}
We sketch the results, the idea of proof is essentially the same as in \cite{Hirosawa06} or \cite{Wirth06}. A Fourier multiplier $a(t,\xi)$ belongs to the symbol class $\mathcal S_{N}^{\ell}\{m_1,m_2\}$ if it satisfies the symbol estimate
$$ \left|\D_t^k\D_\xi^\alpha a(t,\xi)\right| \le C_{k,\alpha}  |\xi|^{m_1-|\alpha|} \Xi(t)^{-m_2-k} $$
for all $(t,\xi)\in Z_{hyp}(N)$ and $k\le\ell$ and all multi-indices $\alpha\in\mathbb N_0^n$. These symbol classes satisfy natural calculus rules (like the ones from \cite[Proposition 6]{Wirth06}). The condition $\Xi(t)\gtrsim\Theta(t)$ gives embedding relations for these symbol classes of the form
$$  \mathcal S_{N}^{\ell}\{m_1,m_2\} \hookrightarrow \mathcal S_{N}^{\ell}\{m_1+k,m_2-k\} ,\qquad k\ge 0,$$
which is important for the application of the diagonalisation scheme sketched below.
Assumption (4) guarantees that $b(t)\in\mathcal S_{N}^{m} \{0,1\}$, such that for $(t,\xi)\in Z_{hyp}(N)$ the matrix $A(t,\xi)$ consists of a main part from $\mathcal S_{N}^{\infty}\{1,0\}$
 and the lower order corner entry from $\mathcal S_{N}^{m} \{0,1\}\hookrightarrow
 \mathcal S_{N}^{m}\{1,0\}$ . 

The following diagonalisation scheme is merely standard and adapted from \cite{Hirosawa06}. In an introductory step we diagonalise the homogeneous main part using $M=(\begin{smallmatrix} 1&-1\\1&1\end{smallmatrix})$, such that $V^{(0)}=M^{-1}V$ satisfies for all $(t,\xi)\in Z_{hyp}(N)$
$$ \D_t V^{(0)} = (\mathcal D_0(t,\xi)+R_0(t,\xi)) V^{(0)} $$
with $\mathcal D_0(t,\xi)=\diag(|\xi|+\i b(t),-|\xi|+\i b(t))$ and $R_0(t,\xi)={\i b(t)}(\begin{smallmatrix}0&1\\1&0\end{smallmatrix})$. This system is diagonal modulo $R_0\in\mathcal S_{N}^{m}\{0,1\}$. Now we apply an iterative procedure to diagonalise it modulo 
$\mathcal S_{N}^{m-k}\{-k,k+1\}$, $k=1,\dots,m$.

\begin{lem}
There exists a zone constant $N>0$ such that for any $k=0,1,\ldots m$ there exist matrices 
\begin{itemize}
\item $N_k(t,\xi)\in\mathcal S_N^{m-k}\{0,0\}$, invertible with inverse $N_k^{-1}\in\mathcal S_N^{m-k}\{0,0\}$
and tending to the identity as $t\to\infty$ for all fixed $\xi\ne0$;
\item $R_k(t,\xi)\in\mathcal S_N^{m-k}\{-k,k+1\}$;
\item $\mathcal D_k(t,\xi)\in\mathcal S_N^{m-k}\{1,0\}$ diagonal, $\mathcal D_k(t,\xi)=\diag(\tau_k^+(t,\xi),\tau_k^{-1}(t,\xi))$
\end{itemize}
satisfying the operator identities
$$ (\D_t-\mathcal D_k-R_k)N_k=N_k(\D_t-\mathcal D_{k+1}-R_{k+1}) $$
for $k=0,1,\ldots,m-1$.
\end{lem}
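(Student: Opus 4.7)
I would prove the lemma by induction on $k=0,1,\ldots,m-1$. The base case $k=0$ is supplied by the preliminary diagonalisation described just above, giving $\mathcal D_0=\diag(|\xi|+\i b(t),-|\xi|+\i b(t))\in\mathcal S_N^{\infty}\{1,0\}$ and $R_0=\i b(t)\bigl(\begin{smallmatrix}0&1\\1&0\end{smallmatrix}\bigr)\in\mathcal S_N^m\{0,1\}$ by assumption (4). For the induction step I would make the standard ansatz $N_k=I+N_k^{(1)}$ with $N_k^{(1)}$ purely off-diagonal and $\mathcal D_{k+1}=\mathcal D_k+D_k$ with $D_k$ purely diagonal. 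Substituting into the operator identity, using $\D_t(N_k\cdot)=(\D_t N_k)+N_k\D_t$, and rearranging, the identity becomes
$$ N_k R_{k+1} = (R_k-D_k)+[\mathcal D_k,N_k^{(1)}]-\D_t N_k^{(1)} + R_k N_k^{(1)} - N_k^{(1)} D_k. $$

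My plan is to cancel the leading-order bracketed terms by the canonical choice: take $D_k$ equal to the diagonal part of $R_k$ and, using that $\mathcal D_k=\diag(\tau_k^+,\tau_k^-)$ acts on any off-diagonal matrix by pure multiplication, solve the algebraic system
$$ (N_k^{(1)})_{12}=\frac{(R_k)_{12}}{\tau_k^--\tau_k^+}, \qquad (N_k^{(1)})_{21}=\frac{(R_k)_{21}}{\tau_k^+-\tau_k^-}. $$
Since $\tau_k^+-\tau_k^-=2|\xi|+\mathcal O(\Xi(t)^{-1})$ in the hyperbolic zone, choosing $N$ large enough that the $2|\xi|$ term dominates yields $(\tau_k^+-\tau_k^-)^{-1}\in\mathcal S_N^{m-k}\{-1,0\}$ and therefore $N_k^{(1)}\in\mathcal S_N^{m-k}\{-k-1,k+1\}$. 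Invoking the embedding $\mathcal S_N^{m-k}\{-k-1,k+1\}\hookrightarrow\mathcal S_N^{m-k}\{0,0\}$ coming from $\Xi\gtrsim\Theta$ and the zone condition $\Theta(t)|\xi|\ge N$, I would obtain $|N_k^{(1)}|\lesssim N^{-k-1}$; for $N$ sufficiently large this makes $N_k=I+N_k^{(1)}$ invertible by a Neumann series with $N_k^{\pm 1}\in\mathcal S_N^{m-k}\{0,0\}$, and gives $N_k\to I$ as $t\to\infty$ for each fixed $\xi\ne 0$.

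With these choices the first two terms on the right of the master equation cancel and the new remainder reduces to
$$ R_{k+1} = N_k^{-1}\bigl(-\D_t N_k^{(1)} + R_k N_k^{(1)} - N_k^{(1)} D_k\bigr). $$
I would check term-by-term that each summand lies in $\mathcal S_N^{m-k-1}\{-k-1,k+2\}$: the derivative $\D_t N_k^{(1)}$ costs one regularity step and one factor $\Xi^{-1}$, while the bilinear products sit a priori in $\mathcal S_N^{m-k}\{-2k-1,2k+2\}$ and embed into $\mathcal S_N^{m-k}\{-k-1,k+2\}$ through the rule $\{m_1,m_2\}\hookrightarrow\{m_1+\ell,m_2-\ell\}$ with $\ell=k$. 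Left multiplication by $N_k^{-1}\in\mathcal S_N^{m-k}\{0,0\}$ preserves the class, completing the induction.

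The main obstacle I anticipate is fixing a single zone constant $N$, uniform across all $m$ induction steps, that simultaneously (i) makes $\tau_k^+-\tau_k^-$ invertible with leading term $2|\xi|$, (ii) makes the Neumann series for each $N_k^{-1}$ converge, and (iii) validates all the symbol embeddings on $Z_{hyp}(N)$. The remaining bookkeeping is routine symbol calculus along the lines of \cite[Proposition 6]{Wirth06} and \cite{Hirosawa06}.
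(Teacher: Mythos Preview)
Your proposal is correct and follows essentially the same diagonalisation scheme as the paper. The only cosmetic difference is bookkeeping: the paper maintains $R_k$ antidiagonal throughout and absorbs the diagonal correction into $\mathcal D_{k+1}$ at the \emph{end} of each step (via $\mathcal D_{k+1}=\mathcal D_k-\diag(N_k^{-1}B^{(k+1)})$), whereas you allow $R_k$ to carry a diagonal part and strip it off as $D_k$ at the \emph{start} of the step; the resulting $N_k$, symbol classes, and remainder estimates are identical.
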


\begin{proof} The proof goes by direct construction.
Assume for this that we have given a system $\D_t V^{(k)} = (\mathcal D_k(t,\xi)+R_k(t,\xi)) V^{(k)}$
with $\mathcal D_k(t,\xi) = \diag(\tau_k^+(t,\xi),\tau_k^-(t,\xi))\in\mathcal S_N^{m-k}\{1,0\}$
satisfying 
$$ |\delta_k(t,\xi)| = |\tau_k^+(t,\xi)-\tau_k^-(t,\xi)| \ge C_k |\xi| $$
and antidiagonal $R_k(t,\xi)  \in \mathcal S_{N}^{m-k}\{-k,k+1\}$. Then we denote the difference of the diagonal entries as $\delta_k(t,\xi)=\tau_k^+(t,\xi)-\tau_k^-(t,\xi)$ and set 
$$ N_k(t,\xi) = I + \begin{pmatrix} 0 & -(R_k)_{12} / \delta_k \\ (R_k)_{21} / \delta_k & 0\end{pmatrix} $$
such that $[\mathcal D_k, N_k] = -R_k$ and therefore
\begin{align*}
 B^{(k+1)}&=(\D_t - \mathcal D_k - R_k) N_k - N_k (\D_t - \mathcal D_{k})\\
&= \D_t N_k - [\mathcal D_k,N_k]  - R_k N_k\\
&= (\D_t N_k) - R_k (N_k-I) \in   \mathcal S_{N}^{m-k-1,\infty}\{-k-1,k+2\}.
\end{align*}
The matrix $N_k(t,\xi)$ is invertible, if we choose the zone constant $N$ sufficiently large. This
follows from the symbol estimate $I-N_k \in  \mathcal S_{N}^{m-k}\{-k-1,k+1\}$. Thus by defining
$ \mathcal D_{k+1} = \mathcal D_k-\diag(N_k^{-1} B^{(k+1)}) $ and 
$R_{k+1}=\diag(N_k^{-1} B^{(k+1)}) -N_k^{-1} B^{(k+1)}$ we obtain the operator equation
$$(\D_t-\mathcal D_k-R_k)N_k = N_k (\D_t-\mathcal D_{k+1}-R_{k+1})$$
and it is easily checked that the assumptions we made are satisfied again. 
\end{proof}

Finally we obtain for
$k=m$ that the remainder $R_m(t,\xi)\in\mathcal S_{N}^{0} \{-m,m+1\}$ is uniformly integrable over
the hyperbolic zone,
$$ \int_{t_\xi^{(2)}}^\infty ||R_m(t,\xi)||\d t \le |\xi|^{-m} \int_{t_\xi^{(2)}}^\infty \Xi(t)^{-m-1} \d t
\le |\xi|^{-m} \Theta(t_\xi^{(2)})^{-m}\le N,$$ 
where we defined $t_\xi^{(2)}$ as the maximum of $0$ and the implicitly defined zone boundary $\Theta(t_\xi^{(2)})|\xi|=N$.  To complete the construction of our representation we need more information on the diagonal matrices $\mathcal D_k$.

\begin{lem}
For all $k=0,1,\ldots, m$ the difference of the diagonal
entries of $\mathcal D_k(t,\xi)$ is real.
\end{lem}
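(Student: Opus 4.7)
The natural approach is induction on $k$. The base case $k=0$ is immediate from $\mathcal{D}_0 = \diag(|\xi|+\i b(t),-|\xi|+\i b(t))$, giving $\delta_0 = 2|\xi|\in\mathbb{R}$.

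The bare claim that $\delta_k$ is real is, however, not inductive on its own: the correction $\diag(N_k^{-1}B^{(k+1)})$ mixes $\mathcal{D}_k$ with $R_k$, so a structural condition on $R_k$ has to be propagated alongside. I would therefore strengthen the inductive hypothesis to three parallel statements: (i) $\delta_k\in\mathbb{R}$; (ii) $\tau_k^++\tau_k^-\in\i\mathbb{R}$; (iii) $(R_k)_{12}+\overline{(R_k)_{21}}=0$. All three hold at $k=0$: the first two from the explicit formula for $\mathcal{D}_0$, the third from $R_0 = \i b(t)(\begin{smallmatrix}0&1\\1&0\end{smallmatrix})$ together with $b(t)\in\mathbb{R}$.

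For the inductive step, set $r=(R_k)_{21}$ and $\alpha=r/\delta_k$, so that (i) and (iii) give $N_k-I=(\begin{smallmatrix}0&\bar\alpha\\\alpha&0\end{smallmatrix})$, a Hermitian antidiagonal matrix with $(N_k-I)^2=|\alpha|^2 I$, hence $N_k^{-1}=(1-|\alpha|^2)^{-1}(2I-N_k)$ with a real prefactor. One computes $R_k(N_k-I)=(|r|^2/\delta_k)\diag(-1,1)$, and $B^{(k+1)}=\D_t(N_k-I)-R_k(N_k-I)$ splits cleanly into an antidiagonal piece $\D_t(N_k-I)$ and a diagonal piece $(|r|^2/\delta_k)\diag(1,-1)$. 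The crucial algebraic ingredient throughout is $\overline{\D_t f}=-\D_t\bar f$, which yields $(\D_t(N_k-I))_{12}+\overline{(\D_t(N_k-I))_{21}}=0$. Expanding $(2I-N_k)B^{(k+1)}$ and using that a product of two antidiagonals is diagonal, one reads off that the difference of diagonal entries equals $2|r|^2/\delta_k-2\Re(\bar\alpha\,\D_t\alpha)$, hence is real; the sum of diagonal entries equals $\i\,\partial_t|\alpha|^2$, hence is purely imaginary; and the antidiagonal part again satisfies $(\cdot)_{12}+\overline{(\cdot)_{21}}=0$. Multiplication by the real scalar $(1-|\alpha|^2)^{-1}$ preserves all three properties, and the recursions $\mathcal{D}_{k+1}=\mathcal{D}_k-\diag(N_k^{-1}B^{(k+1)})$ and $R_{k+1}=\diag(N_k^{-1}B^{(k+1)})-N_k^{-1}B^{(k+1)}$ then reproduce (i)--(iii) at level $k+1$.

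The key obstacle is identifying the right companion invariants: without (iii) one cannot handle the diagonal contribution coming from $R_k(N_k-I)$, and without (ii) the recursion is no longer self-reproducing. Once they are in place, the verification reduces to a mechanical two-by-two computation driven by the single identity $\overline{\D_t f}=-\D_t\bar f$.
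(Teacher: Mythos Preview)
Your proof is correct and follows essentially the same inductive strategy as the paper, whose hypothesis (H), namely $R_k=\i\big(\begin{smallmatrix}&\overline\beta_k\\\beta_k&\end{smallmatrix}\big)$, is precisely your condition (iii); the paper then carries out the same $2\times2$ calculation in the parametrisation $\beta_k$, $d_k=|\beta_k/\delta_k|^2$. One small remark: contrary to your closing comment, hypothesis (ii) is not needed for the induction on (i) and (iii) to close---the paper omits it entirely---though it is true and harmless to carry along.
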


\begin{proof} 
Again we proceed by induction over $k$ and follow the diagonalisation scheme. For $k=0$ the assertion is satisfied and the hypothesis 

(H) $R_k(t,\xi)$ has the form $R_k = \i\big(\begin{smallmatrix} & \overline\beta_k \\ \beta_k &\end{smallmatrix}\big)$ with complex-valued $\beta_k(t,\xi)$

is true. Thus the construction implies $N_k = I  +  \frac\i{\delta_k} \big(\begin{smallmatrix} & -\overline\beta_k\\\beta_k \end{smallmatrix}\big)$ with $\det N_k =1- |\beta_k|^2/\delta_k^2 \ne0$ (for our choice of the zone constant $N$).  Following \cite{Hirosawa06} we obtain (with $d_k = |\beta_k|^2/\delta_k$)
$$ N_k^{-1} (\mathcal D_k+R_k) N_k 
= \frac1{1-d_k} \big( \diag\big(\tau_k^+ - d_k\tau_k^+ - \delta_kd_k, \tau_k^--d_k\tau_k^-+\delta_kd_k\big) +d_kR_k\big) $$
and
$$ N_k^{-1}(\D_t N_k) = \frac1{1-d_k} \left(\begin{pmatrix} \i\frac{\overline\beta_k}{\delta_k} \partial_t \frac{\beta_k}{\delta_k} &   \\ &
\i \frac{\beta_k}{\delta_k} \partial_t \frac{\overline\beta_k}{\delta_k} 
\end{pmatrix} + \begin{pmatrix}& -\partial_t \frac{\overline\beta_k}{\delta_k}\\\partial_t \frac{\beta_k}{\delta_k} \end{pmatrix}\right)
$$
such that $\Re \frac{\beta_k}{\delta_k} \partial_t \frac{\overline\beta_k}{\delta_k} =\frac{ \partial_td_k}2 = \Re 
\frac{\overline\beta_k}{\delta_k} \partial_t \frac{\beta_k}{\delta_k} $ implies
$$ \tau_{k+1}^\pm = \tau_k^\pm \mp \frac1{1-d_k} \left(d_k\delta_k +\Im \left( \frac{\beta_k}{\delta_k} \partial_t \frac{\overline\beta_k}{\delta_k} 
\right)\right) - \i\frac{\partial_t d_k}{2(d_k-1)}.$$
Hence $\delta_{k+1}$ is real again and $R_{k+1}$ satisfies (H) and, therefore, both statements are true for all $k$ up to $m$. 
\end{proof}

Now the construction of the fundamental solution $\mathcal E(t,s,\xi)$ is merely standard. At first we solve the diagonal system $\D_t-\mathcal D_m(t,\xi)$. Its fundamental solution is given by 
$$\exp\left(\i\int_s^t \mathcal D_m(\theta,\xi)\d\theta\right) = \diag\left( e^{\i\int_s^t\tau_m^+(\theta,\xi)\d\theta},  e^{\i\int_s^t\tau_m^-(\theta,\xi)\d\theta}\right) .$$ 
Since $\delta_m=\tau_m^+-\tau_m^-$ is real, it follows that $\Im\tau_m^+ = \Im\tau_m^-=:\Im\tau_m$ and
thus the matrix
$$ \exp\left(\int_s^t \Im\tau_m(\theta,\xi)\d\theta\right) \,\exp\left(\i \int_s^t \mathcal D_m(\theta,\xi)\d\theta\right) $$
is unitary. Note, that the first factor is scalar. Now the integrability of the remainder term $R_m(t,\xi)$ 
over the hyperbolic zone implies (like in \cite{Wirth06}) that the fundamental matrix of
$\D_t-\mathcal D_m-R_m$ is given by $\exp(\i\int_s^t \mathcal D_m(\theta,\xi)\d\theta) \mathcal Q_m(t,s,\xi)$
with a uniformly bounded and invertible matrix $\mathcal Q_m(t,s,\xi)$,
$$
\mathcal Q_m(t,s,\xi) = I+\sum_{k=1}^\infty \int_s^t \tilde R_m(t_1,s,\xi)\int_s^{t_1}\tilde R_m(t_2,s,\xi)\cdots\int_s^{t_{k-1}} \tilde R_m(t_k,s,\xi)\d t_k\cdots\d t_2\d t_1
$$
where 
$$\tilde R_m(t,s,\xi)=\exp(-\i\int_s^t \mathcal D_m(\theta,\xi)\d\theta)
R_m(t,\xi)\exp(\i\int_s^t\mathcal D_m(\theta,\xi)\d\theta)$$ is an auxiliary function. The representation implies that $\mathcal Q_m(t,s,\xi)$ tends to the identity as $t\to\infty$
locally uniform in $s$ and $\xi$. Collecting these results we obtain

\begin{lem}\label{lem44}
The fundamental matrix $\mathcal E(t,s,\xi)$ satisfies 
$$ \mathcal E(t,s,\xi) = M^{-1} \left(\prod_{k=0}^{m-1} N_k^{-1}(t,\xi)\right)
\exp\left(\i\int_s^t \mathcal D_m(\theta,\xi)\d\theta\right) \mathcal Q_m(t,s,\xi)
 \left(\prod_{k=0}^{m-1} N_k(s,\xi)\right) M $$
for all $(t,\xi),(s,\xi)\in Z_{hyp}(N)$, where
\begin{itemize}
\item the matrices $N_k(t,\xi)$ are uniformly bounded
and invertible with $N_k(t,\xi)\to I$ and
\item $\mathcal Q_m(t,s,\xi)$ is uniformly bounded satisfying
$\mathcal Q_m(t,s,\xi)\to\mathcal Q_m(\infty,s,\xi)$, 
\end{itemize}
both limits locally uniform in $\xi$ as $t\to\infty$.
\end{lem}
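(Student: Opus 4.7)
The plan is a direct assembly of the ingredients built up in this section: chain together the transformations from the diagonalisation scheme and then solve the resulting reduced system as a Volterra perturbation of a purely diagonal evolution. Concretely, setting $V^{(0)}=M^{-1}V$ and successively $V^{(k)}=N_k V^{(k+1)}$ for $k=0,\ldots,m-1$, the original system $\D_t V=A(t,\xi)V$ is transformed into $\D_t V^{(m)}=(\mathcal D_m(t,\xi)+R_m(t,\xi))V^{(m)}$ on $Z_{hyp}(N)$, with $R_m\in\mathcal S_N^0\{-m,m+1\}$ uniformly integrable over the hyperbolic zone by condition (5). The factorisation in the statement is then the composition of these transformations together with the fundamental matrix of the reduced system.

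To handle the reduced system I would use the ansatz $V^{(m)}(t)=\exp(\i\int_s^t\mathcal D_m(\theta,\xi)\d\theta)W(t,s,\xi)$, turning the equation into the Volterra equation $W(t,s,\xi)=I+\int_s^t \tilde R_m(\tau,s,\xi)W(\tau,s,\xi)\d\tau$, where $\tilde R_m$ is the conjugate of $R_m$ by the exponential as defined in the excerpt. The reality of $\delta_m=\tau_m^+-\tau_m^-$ (from the previous lemma) is essential here: after extracting the scalar factor $\exp(\int_s^t\Im\tau_m(\theta,\xi)\d\theta)$, the remaining diagonal exponential is unitary, so $\|\tilde R_m(\tau,s,\xi)\|=\|R_m(\tau,\xi)\|$. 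The Peano--Baker series defining $\mathcal Q_m(t,s,\xi)$ stated in the excerpt then converges absolutely with sup-norm bound $\exp(N)$, as a consequence of the uniform $L^1$-bound on $\|R_m(\cdot,\xi)\|$ derived right after the diagonalisation lemma. Existence of $\mathcal Q_m(\infty,s,\xi)$ follows by the Cauchy criterion, locally uniform in $\xi\ne 0$, and invertibility from the analogous backward Volterra equation.

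It remains to check that the outer factors behave as claimed. The individual $N_k$ are invertible with uniformly bounded inverses by the choice of the zone constant $N$ (established already in the diagonalisation lemma), and the convergence $N_k(t,\xi)\to I$ for fixed $\xi\ne 0$ as $t\to\infty$ follows from the symbol estimate $N_k-I\in\mathcal S_N^{m-k}\{-k-1,k+1\}$ together with $\Xi(t)\gtrsim\Theta(t)\to\infty$. The hard point in the whole argument is the uniform boundedness of $\mathcal Q_m$: if $\delta_m$ had a nontrivial imaginary part, the conjugation defining $\tilde R_m$ would introduce exponentially growing or decaying rescalings of the off-diagonal entries of $R_m$, and the $L^1$-estimate on $\|R_m\|$ alone would be insufficient. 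Thus the structural observation that $\delta_k$ is real for all $k$ is precisely what makes the Neumann series closable in the operator norm and delivers the stated representation.
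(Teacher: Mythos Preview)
Your proposal is correct and follows essentially the same approach as the paper: the representation is assembled by chaining the transformations $V^{(0)}=M^{-1}V$ and $V^{(k)}=N_kV^{(k+1)}$ from the diagonalisation lemma, then solving the reduced system $\D_t-\mathcal D_m-R_m$ via the Peano--Baker series for $\mathcal Q_m$, using the reality of $\delta_m$ to ensure that conjugation by the diagonal exponential preserves $\|R_m\|$ and hence that the uniform $L^1$-bound on $R_m$ over $Z_{hyp}(N)$ yields uniform boundedness and convergence of $\mathcal Q_m$. Your emphasis on why reality of $\delta_m$ is the crucial structural input is exactly the point the paper isolates in the preceding lemma.
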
 

Hence,
the time-asymptotics of solutions is encoded in the real-valued function $\Im\tau_m(t,\xi)$,
$$ ||\mathcal E(t,s,\xi)|| \approx \exp\left(-\int_s^t \Im\tau_m(\theta,\xi)\d\theta\right), \qquad t\to\infty $$
locally uniform in $\xi$ for fixed $s$  (and such that $(s,\xi)\in Z_{hyp}(N)$). We can use our representation of $\tau_m(t,\xi)$ to deduce
$$ \Im\tau_m(t,\xi) = b(t) + \sum_{j=1}^{m-1} \frac {\partial_t d_k }{2(d_k-1)} $$
such that
$$ \exp\left(-\int_s^t \Im \tau_m(\theta,\xi)\d\theta\right) = \exp\left(-\int_s^t b(\theta)\d\theta\right)
\prod_{j=1}^{m-1} \left(\frac{d_k(t,\xi)-1}{d_k(s,\xi)-1}\right)^{-1/2} \approx \frac{\lambda(s)}{\lambda(t)}.$$

\begin{cor}\label{cor45}
The fundamental matrix $\mathcal E(t,s,\xi)$ satisfies uniformly in $(t,\xi),(s,\xi)\in Z_{hyp}(N)$
the two-sided estimate
$$ ||\mathcal E(t,s,\xi)||\approx\frac{\lambda(s)}{\lambda(t)}.$$
\end{cor}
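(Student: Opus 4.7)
The plan is to extract the corollary directly from the representation in Lemma~\ref{lem44} together with the identification of $\Im\tau_m$ performed in the paragraph just above the corollary. So the proof is really a bookkeeping exercise, and I would organise it in three steps.

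First I would observe that, under the hypothesis $(t,\xi),(s,\xi)\in Z_{hyp}(N)$, Lemma~\ref{lem44} expresses $\mathcal E(t,s,\xi)$ as a product of three groups of factors: the constant $M^{\pm1}$, the diagonalising products $\prod N_k^{\pm1}$, and the pair $\exp(\i\int_s^t\mathcal D_m\,\d\theta)\,\mathcal Q_m(t,s,\xi)$. The $N_k(t,\xi)$ are uniformly bounded and invertible by construction (with inverses in the same symbol class), and $\mathcal Q_m$ is uniformly bounded in both directions: its Neumann series converges absolutely with bound $\exp\int_{t_\xi^{(2)}}^\infty\|R_m(\theta,\xi)\|\,\d\theta$, which is controlled by assumption (5). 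Thus all ``sandwich'' factors drop out of a two-sided norm estimate and
$$\|\mathcal E(t,s,\xi)\|\approx\left\|\exp\!\left(\i\int_s^t\mathcal D_m(\theta,\xi)\,\d\theta\right)\right\|.$$

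Second, since the preceding lemma shows $\delta_m=\tau_m^+-\tau_m^-$ to be real, the entries $\tau_m^\pm$ have the same imaginary part $\Im\tau_m$ and the diagonal exponential factorises as the scalar $\exp(-\int_s^t\Im\tau_m(\theta,\xi)\,\d\theta)$ times a unitary matrix. Inserting the formula
$$\Im\tau_m(t,\xi)=b(t)+\sum_{k=1}^{m-1}\frac{\partial_t d_k}{2(d_k-1)}$$
stated above the corollary and integrating gives
$$\int_s^t\Im\tau_m\,\d\theta=\int_s^t b(\theta)\,\d\theta+\frac12\sum_{k=1}^{m-1}\log\left|\frac{d_k(t,\xi)-1}{d_k(s,\xi)-1}\right|.$$
The symbol estimates $\beta_k\in\mathcal S_N^{m-k}\{-k,k+1\}$ and $\delta_k\approx|\xi|$ force $d_k=|\beta_k/\delta_k|^2=\mathcal O((|\xi|\,\Xi(t))^{-2(k+1)})$, so for $N$ sufficiently large each $d_k(t,\xi)-1$ stays in a fixed compact subset of $(-2,0)$. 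Hence the product over $k$ contributes a factor that is bounded above and below by positive constants.

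Third, writing $2b=\mu+\sigma$ I obtain
$$\exp\!\left(-\int_s^t b(\theta)\,\d\theta\right)=\frac{\lambda(s)}{\lambda(t)}\exp\!\left(-\frac12\int_s^t\sigma(\theta)\,\d\theta\right),$$
and assumption (2) makes the second factor uniformly bounded above and below in $s$ and $t$. Combining the three steps yields the claimed two-sided comparison.

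The one point that is not completely mechanical is the uniform invertibility of $\mathcal Q_m$; it forces one to fix the zone constant $N$ large enough that the Neumann series expansion is a small perturbation of the identity. Everything else -- the uniform boundedness of $N_k^{\pm1}$, the smallness of $d_k$, and the boundedness of $\int\sigma$ -- is a direct consequence of the symbol calculus set up in Section~\ref{sec4} and of the standing assumption (2).
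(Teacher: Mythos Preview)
Your proposal is correct and follows essentially the same route as the paper: the corollary is deduced in the text immediately preceding it by combining Lemma~\ref{lem44} (so that the $M^{\pm1}$, $N_k^{\pm1}$ and $\mathcal Q_m$ factors are uniformly two-sided bounded) with the factorisation of the diagonal exponential into a unitary matrix times the scalar $\exp(-\int_s^t\Im\tau_m)$, and then evaluating the latter via the explicit formula for $\Im\tau_m$. Your write-up is in fact a little more explicit than the paper on two points the text leaves implicit, namely the smallness of $d_k$ ensuring the product $\prod_k(d_k(t,\xi)-1)/(d_k(s,\xi)-1)$ is two-sided bounded, and the use of assumption~(2) to replace $\exp(-\int_s^t b)$ by $\lambda(s)/\lambda(t)$.
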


\subsection{Estimates in the intermediate zone} 
Since $\Theta(t) \mu(t) \lesssim \Theta(t) / (1+t) = o(1)$, there remains a gap between the dissipative and the hyperbolic zone. We will denote this zone as 
$$Z_{int}(N)=\{(t,\xi)\,|\, t_\xi^{(1)}\le t\le t_\xi^{(2)}\}.$$
 In this zone we relate $\mathcal E(t,s,\xi)$ to (known estimates for) $\mathcal E_\mu(t,s,\xi)$ and use the stabilisation condition (3). For this we solve 
 $$
 \D_t \Lambda(t,s,\xi) = \big(A(t,\xi)-A_\mu(t,\xi)\big)\Lambda(t,s,\xi),\qquad \Lambda(s,s,\xi)=I,
 $$
 which gives 
$$ \Lambda(t,s,\xi) = \diag\left(1,\exp\big(-\int_s^t \sigma(\theta)\d\theta\big)  \right),$$
and make the {\em ansatz} $\mathcal E(t,s,\xi)=\Lambda(t,s,\xi)\mathcal R(t,s,\xi)$.
It follows that the matrix $\mathcal R(t,s,\xi)$ satisfies
$$ \D_t\mathcal R(t,s,\xi) = \Lambda(s,t,\xi)A_\mu(t,\xi)\Lambda(t,s,\xi) \mathcal R(t,s,\xi),\qquad \mathcal R(s,s,\xi)=I, $$
where the coefficient matrix in this system has the form
\begin{align*}
  \tilde A_\mu(t,s,\xi) &=\Lambda(s,t,\xi)A_\mu(t,\xi)\Lambda(t,s,\xi)\\&= \begin{pmatrix} 0 & \exp\big(-\int_s^t \sigma(\theta)\d\theta\big) |\xi|  \\ \exp\big(\int_s^t \sigma(\theta)\d\theta\big) |\xi| &\i\mu(t) \end{pmatrix}  
\end{align*}
Note that condition (3) means $\Lambda(t,s,\xi)\rightsquigarrow\diag(1,\hat \omega_\infty(s)^{-1})$, where we use $\hat\omega_\infty(s)=\omega_\infty \exp(-\int_0^s \sigma(\theta)\d\theta)$.
Condition (2) implies that $0<c\le\hat\omega_\infty(s)\le C<\infty$ with suitable constants.
Thus the new speed of propagation satisfies the stabilisation condition (as used in \cite{Hirosawa06}), while the dissipation term has no bad influence as consequence of assumption (1).  

We denote by $\hat A_\mu(t,s,\xi)$ the matrix 
$$\hat A_\mu(t,s,\xi)=\begin{pmatrix}0&\hat\omega_\infty(s)^{-1} |\xi|\\\hat\omega_\infty(s)|\xi|&\i\mu(t)\end{pmatrix} $$ 
and solve the corresponding system $\D_t-\hat A_\mu$. The diagonaliser of the $|\xi|$-homogeneous part is given by
$$ \hat M(s) = \begin{pmatrix} 1 & -1 \\ \hat\omega_\infty(s) & \hat\omega_\infty(s) \end{pmatrix},
\qquad \hat M^{-1}(s) = \frac12  \begin{pmatrix}1 &\hat \omega_\infty(s)^{-1} \\ -1 &\hat\omega_\infty(s)^{-1} \end{pmatrix},$$
such that
$$ \hat M^{-1}(s) \hat A_\mu(t,s,\xi) \hat M(s) = \begin{pmatrix}|\xi|&0\\0&-|\xi|\end{pmatrix} + \frac{\i\mu(t)}2\begin{pmatrix}1&1\\1&1\end{pmatrix}.$$
Surprisingly, this means
$$ M\hat M^{-1}(s) \hat A_\mu(t,s,\xi) \hat M(s) M^{-1} = A_\mu(t,\xi), $$
such that the solution $\hat{\mathcal E}_\mu(t,s,\xi)$ to the auxiliary problem 
$$(\D_t-\hat A_\mu(t,s,\xi))\hat{\mathcal E}_\mu(t,s,\xi)=0,\qquad \hat{\mathcal E}_\mu(s,s,\xi)=I,$$ satisfies 
$\hat{\mathcal E}_\mu(t,s,\xi)=\hat M(s) M^{-1}\mathcal E_\mu(t,s,\xi) M \hat M^{-1}(s)$.  
This relation implies:

\begin{lem}\label{lem46}
The matrix $\hat{\mathcal E}_\mu(t,s,\xi)$ satisfies uniformly in $(t,\xi),(s,\xi)\in Z_{hyp}^{(\mu)}(N)$
$$\|\hat{\mathcal E}_\mu(t,s,\xi)\|\approx \frac{ \lambda(s)}{\lambda(t)}.$$
\end{lem}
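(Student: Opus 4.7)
The plan is to exploit the conjugation identity $\hat{\mathcal E}_\mu(t,s,\xi)=\hat M(s) M^{-1}\mathcal E_\mu(t,s,\xi) M \hat M^{-1}(s)$ that has just been derived, which reduces the claim to the known behaviour of $\mathcal E_\mu(t,s,\xi)$ in its own hyperbolic zone together with uniform estimates on the conjugating factors.

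First I would show that the conjugation does not alter norms up to uniform constants. The matrix $M$ is a fixed constant invertible matrix, so $\|M\|,\|M^{-1}\|\lesssim 1$. For $\hat M(s)$ and $\hat M^{-1}(s)$ one notes that all their entries are polynomials in $\hat\omega_\infty(s)$ and $\hat\omega_\infty(s)^{-1}$, and by condition (2) the integral $\int_0^s\sigma(\theta)\d\theta$ is uniformly bounded, so that
$$ 0<c\le \hat\omega_\infty(s)=\omega_\infty\exp\!\Bigl(-\int_0^s\sigma(\theta)\d\theta\Bigr)\le C<\infty $$
uniformly in $s$. This gives $\|\hat M(s)\|,\|\hat M^{-1}(s)\|\lesssim 1$ uniformly in $s$, hence
$$ \|\hat{\mathcal E}_\mu(t,s,\xi)\|\approx \|\mathcal E_\mu(t,s,\xi)\| $$
with constants independent of $s$, $t$ and $\xi$.

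Second, I would invoke the known two-sided estimate for $\mathcal E_\mu$ in $Z_{hyp}^{(\mu)}(N)$ recalled at the beginning of Section~\ref{sec4} (and established in \cite{Wirth05}, \cite{Wirth06}): on the hyperbolic zone of the pure $\mu$-problem one has a representation $\mathcal E_\mu(t,s,\xi)=\frac{\lambda(s)}{\lambda(t)}\mathcal E_0(t,s,\xi)\mathcal Q(t,s,\xi)$ with $\mathcal Q$ uniformly bounded and invertible, and $\mathcal E_0(t,s,\xi)$ unitary. Taking norms yields $\|\mathcal E_\mu(t,s,\xi)\|\approx \lambda(s)/\lambda(t)$ uniformly on $Z_{hyp}^{(\mu)}(N)$, and combining this with the conjugation estimate from the previous step finishes the proof.

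The argument is essentially routine once the key conjugation has been set up, so no genuine obstacle arises; the only point worth emphasising is that all constants must be tracked uniformly in $s$. This uniformity is not automatic, but it follows from assumption (2), which is precisely what keeps $\hat\omega_\infty(s)$ within a compact subinterval of $(0,\infty)$. Without (2) the factors $\hat M(s)^{\pm1}$ could blow up and the equivalence $\|\hat{\mathcal E}_\mu\|\approx\|\mathcal E_\mu\|$ would be lost.
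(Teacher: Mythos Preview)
Your proposal is correct and follows exactly the approach the paper takes: the paper simply states ``This relation implies:'' before Lemma~\ref{lem46}, meaning the conjugation identity $\hat{\mathcal E}_\mu=\hat M(s)M^{-1}\mathcal E_\mu M\hat M^{-1}(s)$ together with the uniform bounds $0<c\le\hat\omega_\infty(s)\le C$ from condition~(2) and the known two-sided estimate for $\mathcal E_\mu$ in $Z_{hyp}^{(\mu)}(N)$. You have merely spelled out the details that the paper leaves implicit.
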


Now we use the stabilisation property of $\tilde A_\mu(t,s,\xi)$ to find $\mathcal R(t,s,\xi)$ of the form $\mathcal R(t,s,\xi) = \hat{\mathcal E}_\mu(t,s,\xi) \mathcal Q_{\mathcal R}(t,s,\xi)$.  The coefficient matrix of the differential equation satisfied by $\mathcal Q_{\mathcal R}$,
$$ \D_t\mathcal Q_{\mathcal R}(t,s,\xi)  = \hat{\mathcal E}_\mu(s,t,\xi)\big(\tilde A_\mu(t,s,\xi)-\hat A_\mu(t,s,\xi)\big)\hat{\mathcal E}_\mu(t,s,\xi) \mathcal Q_{\mathcal R}(t,s,\xi), \qquad \mathcal Q_{\mathcal R}(s,s,\xi)  =I,  $$
satisfies the estimate (note, that the two-sided estimates for $\hat{\mathcal E}_\mu$ cancel each other)
\begin{align*}
 \int_s^t &  \|\hat{\mathcal E}_\mu(s,\tau,\xi)\big(\tilde A_\mu(\tau,s,\xi)-\hat A_\mu(\tau,\xi)\big)\hat{\mathcal E}_\mu(\tau,s,\xi)\|\d\tau \\ 
 & \approx |\xi| \int_s^t \left| \exp\left(\int_s^\theta\sigma(\tau)\d \tau\right) -\omega_\infty(s) \right|\d \theta \\
 & \approx |\xi| \int_s^t \left| \exp\left(\int_0^\theta\sigma(\tau)\d \tau\right) -\omega_\infty \right|\d \theta   \le |\xi|\Theta(t) \le N.
\end{align*}
Now the standard construction of $\mathcal Q_{\mathcal R}(t,s,\xi)$ in terms of the Peano-Baker series
(as done for $\mathcal Q_m(t,s,\xi)$ before) gives uniform bounds for this matrix and for its inverse within the intermediate zone. Thus

\begin{lem}
   The fundamental matrix $\mathcal E(t,s,\xi)$ can be represented in $Z_{int}(N)$ as
   $$ \mathcal E(t,s,\xi) = \Lambda(t,s,\xi) \hat{\mathcal E}_\mu(t,s,\xi) \mathcal Q_{\mathcal R}(t,s,\xi), $$
   where $\Lambda(t,s,\xi)$ and $\mathcal Q_{\mathcal R}(t,s,\xi)$ are uniformly bounded in
   $(t,\xi),(s,\xi)\in Z_{int}(N)$ and $\hat{\mathcal E}_\mu(t,s,\xi)$ satisfies the bound of 
   Lemma~\ref{lem46}.
\end{lem}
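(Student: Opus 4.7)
The plan is to package the two ansätze already introduced above the statement---$\mathcal E=\Lambda\mathcal R$ followed by $\mathcal R=\hat{\mathcal E}_\mu\mathcal Q_{\mathcal R}$---into the single factorisation claimed. The identity itself then follows by composition, and Lemma~\ref{lem46} supplies the bound on the middle factor, since the inclusion $Z_{int}(N)\subset Z_{hyp}^{(\mu)}(N)$ is automatic: by monotonicity of $\mu$, the condition $t\ge t_\xi^{(1)}$ is equivalent to $|\xi|\ge N\mu(t)$, and the same applies to $s$. The remaining task is uniform control of the two outer factors on pairs $(t,\xi),(s,\xi)\in Z_{int}(N)$.

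For $\Lambda(t,s,\xi)=\diag(1,\exp(-\int_s^t\sigma(\theta)\d\theta))$ I would invoke condition (2) directly: it provides a uniform bound on $\int_0^\tau\sigma(\theta)\d\theta$. Writing $\int_s^t=\int_0^t-\int_0^s$ then yields uniform bounds on the exponential and its reciprocal, so $\Lambda$ and $\Lambda^{-1}$ are uniformly bounded on all of $\mathbb R_+\times\mathbb R^n$, hence \emph{a fortiori} on $Z_{int}(N)$.

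For $\mathcal Q_{\mathcal R}$, which solves a linear ODE with identity initial datum, I would use the Peano--Baker representation exactly as for $\mathcal Q_m$ in the hyperbolic zone. Convergence and uniform boundedness are governed by the integral of the norm of the coefficient matrix, which has already been estimated in the discussion preceding the statement:
$$\int_s^t\|\hat{\mathcal E}_\mu(s,\tau,\xi)\bigl(\tilde A_\mu(\tau,s,\xi)-\hat A_\mu(\tau,s,\xi)\bigr)\hat{\mathcal E}_\mu(\tau,s,\xi)\|\d\tau\lesssim|\xi|\Theta(t).$$
The decisive observation is that on $Z_{int}(N)$ one has $t\le t_\xi^{(2)}$, so by the definition of $t_\xi^{(2)}$, $|\xi|\Theta(t)\le|\xi|\Theta(t_\xi^{(2)})=N$. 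The Peano--Baker series then converges absolutely with the standard exponential bound $\|\mathcal Q_{\mathcal R}(t,s,\xi)\|\le e^{CN}$, and applying the same argument to the reverse-time equation gives the matching bound for $\mathcal Q_{\mathcal R}^{-1}$.

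The main obstacle is in fact hidden in the preparatory steps rather than in the final assembly: the two-sided nature of Lemma~\ref{lem46} is what lets the conjugation $\hat{\mathcal E}_\mu(s,\tau,\xi)\cdot\hat{\mathcal E}_\mu(\tau,s,\xi)$ produce cancellation of the $\lambda$-factors down to a constant, and the passage from $\hat\omega_\infty(s)$ to the absolute constant $\omega_\infty$ inside the integrand (their quotient being $s$-dependent but bounded above and below by (2)) is what allows condition (3) to be invoked directly. Once these identifications have been made, the lemma reduces to collecting the three factors and their bounds.
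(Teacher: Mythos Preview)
Your proposal is correct and follows essentially the same approach as the paper: the paper treats this lemma as a summary of the preceding discussion, where the factorisation $\mathcal E=\Lambda\hat{\mathcal E}_\mu\mathcal Q_{\mathcal R}$ is assembled from the two ans\"atze and the Peano--Baker argument for $\mathcal Q_{\mathcal R}$ is invoked (``as done for $\mathcal Q_m(t,s,\xi)$ before''). Your write-up adds a couple of details the paper leaves implicit---the inclusion $Z_{int}(N)\subset Z_{hyp}^{(\mu)}(N)$ via monotonicity of $\mu$, and the explicit use of condition~(2) for the boundedness of $\Lambda$---but these are exactly the ingredients the paper relies on tacitly.
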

\begin{cor}\label{cor48}
The fundamental matrix $\mathcal E(t,s,\xi)$ satisfies uniformly in $(t,\xi),(s,\xi)\in Z_{int}(N)$
the two-sided estimate
$$ ||\mathcal E(t,s,\xi)||\approx\frac{\lambda(s)}{\lambda(t)}.$$
\end{cor}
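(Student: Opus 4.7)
The plan is to read off the two-sided norm estimate directly from the factorisation established in the preceding lemma,
$$ \mathcal E(t,s,\xi) = \Lambda(t,s,\xi)\,\hat{\mathcal E}_\mu(t,s,\xi)\,\mathcal Q_{\mathcal R}(t,s,\xi), $$
combined with Lemma~\ref{lem46}. The only real content is to check that the two outer factors are not just uniformly bounded, but also have uniformly bounded inverses on $Z_{int}(N)$; then submultiplicativity of the spectral norm gives the upper bound, and applying the same inequality to $\hat{\mathcal E}_\mu = \Lambda^{-1}\mathcal E\,\mathcal Q_{\mathcal R}^{-1}$ yields the matching lower bound.

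First I would note the inclusion $Z_{int}(N)\subset Z_{hyp}^{(\mu)}(N)$: by definition of $t_\xi^{(1)}$ we have $|\xi|\ge N\mu(t)$ throughout $Z_{int}(N)$, and since $\mu$ is decreasing, the same holds at the second time argument $s$. Hence Lemma~\ref{lem46} applies and gives $\|\hat{\mathcal E}_\mu(t,s,\xi)\|\approx\lambda(s)/\lambda(t)$. Second, the factor $\Lambda(t,s,\xi)=\diag(1,\exp(-\int_s^t\sigma(\theta)\d\theta))$ is diagonal, and by the generalised zero mean condition (2) both the diagonal entry and its reciprocal are uniformly bounded on $\{(t,s):t,s\ge0\}$; in particular $\|\Lambda\|\approx\|\Lambda^{-1}\|\approx 1$.

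Third, the matrix $\mathcal Q_{\mathcal R}(t,s,\xi)$ was constructed via a Peano--Baker series whose generator has integral norm bounded by $|\xi|\Theta(t)\le N$ uniformly on $Z_{int}(N)$; the same estimate applied to the time-reversed equation produces $\mathcal Q_{\mathcal R}^{-1}$ as a convergent series bounded by the same constant. Thus $\|\mathcal Q_{\mathcal R}\|,\|\mathcal Q_{\mathcal R}^{-1}\|\lesssim 1$ on $Z_{int}(N)$. Combining these three facts, submultiplicativity gives $\|\mathcal E\|\le\|\Lambda\|\,\|\hat{\mathcal E}_\mu\|\,\|\mathcal Q_{\mathcal R}\|\lesssim\lambda(s)/\lambda(t)$ and $\lambda(s)/\lambda(t)\lesssim\|\hat{\mathcal E}_\mu\|\le\|\Lambda^{-1}\|\,\|\mathcal E\|\,\|\mathcal Q_{\mathcal R}^{-1}\|\lesssim\|\mathcal E\|$, completing the corollary. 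The only step worth more than one line of verification is the uniform invertibility of $\mathcal Q_{\mathcal R}$, but this is a direct consequence of the same Peano--Baker bound already used in the existence argument, and so poses no genuine obstacle.
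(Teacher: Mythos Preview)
Your proposal is correct and follows exactly the route the paper intends: the corollary is stated without explicit proof, as an immediate consequence of the factorisation lemma just before it together with Lemma~\ref{lem46} and the uniform boundedness (with bounded inverses) of $\Lambda$ and $\mathcal Q_{\mathcal R}$, all of which are already asserted in the surrounding text. You have simply spelled out the submultiplicativity argument and the inclusion $Z_{int}(N)\subset Z_{hyp}^{(\mu)}(N)$ explicitly, which is precisely what the paper leaves to the reader.
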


\section{Main results -- Energy decay estimates}\label{sec5}
The results of the previous section can be collected as energy estimates for solutions
to the original Cauchy problem. We obtain two results. The first one states the energy decay
estimate. The use of the $H^1$-norm for the first datum is {\em essential} for the validity of the 
statement. This cancels the $|\xi|^{-1}$ in the estimate of Lemma~\ref{lem41}.

\begin{thm}
Solutions to the Cauchy problem
$$ \square u+2b(t)u_t =0,\qquad u(0,\cdot)=u_1,\quad u_t(0,\cdot)=u_2 $$
for data $u_1\in H^1(\R^n)$ and $u_2\in L^2(\R^n)$ and coefficient function $b(t)$ subject to conditions (1)--(5) satisfy the {\em a-priori} estimate
$$ \|\nabla u(t,\cdot)\|_{L^2} + \|u_t(t,\cdot)\|_{L^2} \le C \frac1{\lambda(t)} \big( \|u_1\|_{H^1}+\|u_2\|_{L^2}\big) $$
with a constant $C$ depending on the size of the coefficient $b(t)$ and its first $m$ derivatives, where  the function $\lambda(t)$ is given by $\lambda(t) = \exp(1/2\int_0^t \mu(\tau)\d\tau)\approx\exp(\int_0^t b(s)\d s$. 
\end{thm}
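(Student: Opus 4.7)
The plan is to reduce the estimate to a pointwise bound on the Fourier side, show that $\|\mathcal E(t,0,\xi)\|$ (suitably interpreted) is bounded by $1/\lambda(t)$ uniformly in $\xi$, and then apply Plancherel. After the partial Fourier transform the energy equals
\[
\|\nabla u(t,\cdot)\|_{L^2}^2+\|u_t(t,\cdot)\|_{L^2}^2 = \int_{\R^n} \|V(t,\xi)\|_{\C^2}^2\d\xi,
\]
where $V(t,\xi)=\mathcal E(t,0,\xi)V(0,\xi)$ with $V(0,\xi)=(|\xi|\hat u_1(\xi),\hat u_2(\xi))^T$. Hence it is enough to establish
\[
 \|V(t,\xi)\|_{\C^2} \lesssim \lambda(t)^{-1} \bigl(\langle\xi\rangle|\hat u_1(\xi)|+|\hat u_2(\xi)|\bigr)
\]
uniformly in $\xi\in\R^n$, since then Plancherel yields $\|\langle\xi\rangle\hat u_1\|_{L^2}=\|u_1\|_{H^1}$ and $\|\hat u_2\|_{L^2}=\|u_2\|_{L^2}$ on the right.

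First I would fix a zone constant $N$ for which all three Corollaries~\ref{cor41a}, \ref{cor45}, \ref{cor48} apply and split the frequency range into the two cases $|\xi|\ge N\mu(0)$ and $|\xi|<N\mu(0)$. In the first case $(0,\xi)\in Z_{hyp}(N)$, so Corollary~\ref{cor45} gives directly $\|\mathcal E(t,0,\xi)\|\approx \lambda(t)^{-1}$ and the bound follows from $\|V(0,\xi)\|\le |\xi||\hat u_1|+|\hat u_2|\lesssim \langle\xi\rangle|\hat u_1|+|\hat u_2|$. In the second case the point $(0,\xi)$ lies in the dissipative zone and the trajectory $t\mapsto(t,\xi)$ successively passes through $Z_{diss}(N)$, $Z_{int}(N)$ and $Z_{hyp}(N)$ at the zone boundaries $t_\xi^{(1)}$ and $t_\xi^{(2)}$. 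I would decompose $\mathcal E(t,0,\xi)$ accordingly using the flow property
\[
\mathcal E(t,0,\xi)=\mathcal E(t,t_\xi^{(2)},\xi)\,\mathcal E(t_\xi^{(2)},t_\xi^{(1)},\xi)\,\mathcal E(t_\xi^{(1)},0,\xi)
\]
for $t\ge t_\xi^{(2)}$ (and with obvious modifications if $t<t_\xi^{(2)}$). The intermediate and hyperbolic factors contribute $\lambda(t_\xi^{(2)})/\lambda(t)$ and $\lambda(t_\xi^{(1)})/\lambda(t_\xi^{(2)})$ respectively by Corollaries~\ref{cor45} and~\ref{cor48}, telescoping to the prefactor $\lambda(t_\xi^{(1)})/\lambda(t)$. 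It therefore remains to control the action of $\mathcal E(t_\xi^{(1)},0,\xi)$ on $V(0,\xi)$.

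This is the step that forces the use of the $H^1$-norm of the first datum and is the main obstacle. Writing $V(0,\xi)=\diag(|\xi|/\langle\xi\rangle,1)\cdot(\langle\xi\rangle\hat u_1(\xi),\hat u_2(\xi))^T$ and invoking Corollary~\ref{cor41a} at $t=t_\xi^{(1)}$ gives
\[
\|\mathcal E(t_\xi^{(1)},0,\xi)V(0,\xi)\|\le \bigl\|\mathcal E(t_\xi^{(1)},0,\xi)\diag(|\xi|/\langle\xi\rangle,1)\bigr\|\,(\langle\xi\rangle|\hat u_1|+|\hat u_2|)\lesssim \lambda(t_\xi^{(1)})^{-2}\bigl(\langle\xi\rangle|\hat u_1|+|\hat u_2|\bigr).
\]
Combining the two factors yields
\[
\|V(t,\xi)\|\lesssim \frac{\lambda(t_\xi^{(1)})}{\lambda(t)}\cdot\frac{1}{\lambda(t_\xi^{(1)})^{2}}\bigl(\langle\xi\rangle|\hat u_1|+|\hat u_2|\bigr)= \frac{1}{\lambda(t)\lambda(t_\xi^{(1)})}\bigl(\langle\xi\rangle|\hat u_1|+|\hat u_2|\bigr),
\]
and since $\lambda(t_\xi^{(1)})\ge\lambda(0)=1$ this is bounded by $\lambda(t)^{-1}(\langle\xi\rangle|\hat u_1|+|\hat u_2|)$, as required. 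The trivial subcases $t\in Z_{diss}(N)$ or $t\in Z_{int}(N)$ are handled similarly, truncating the telescoped product at the appropriate zone and using that $\lambda(\cdot)$ is increasing. Plancherel then produces the claimed energy inequality, the constant $C$ being the product of the implicit constants from Lemma~\ref{lem41} and Corollaries~\ref{cor45}, \ref{cor48}, all of which depend only on $b$ through its first $m$ derivatives via conditions (1)--(5).
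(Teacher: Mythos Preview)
Your argument is correct and follows exactly the route of the paper: reduce via Plancherel to the uniform pointwise bound $\|\mathcal E(t,0,\xi)\diag(|\xi|/\langle\xi\rangle,1)\|\lesssim\lambda(t)^{-1}$, then decompose along the zones by the flow property and combine Corollaries~\ref{cor41a}, \ref{cor45} and~\ref{cor48}; the paper merely states this combination, while you have spelled out the telescoping in detail. One small slip: since $Z_{hyp}(N)=\{\Theta(t)|\xi|\ge N\}$ and $\Theta(0)=\mu(0)$, the condition for $(0,\xi)\in Z_{hyp}(N)$ is $|\xi|\ge N/\mu(0)$, not $|\xi|\ge N\mu(0)$; for frequencies strictly between these two thresholds the trajectory starts in $Z_{int}(N)$ and Corollary~\ref{cor48} applies first, but your telescoping argument goes through unchanged in that case.
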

\begin{proof}
By Plancherel's theorem it is equivalent to prove the corresponding statement in Fourier space,
$$ \|\xi \hat u(t,\cdot)\|_{L^2} + \|\hat u_t(t,\cdot)\|_{L^2} \le C \frac1{\lambda(t)} \big( \|\langle\xi\rangle \hat u_1\|_{L^2}+\|\hat u_2\|_{L^2}\big), $$
which reduces by the unitarity of Riesz transform, $\|\xi\hat u\|_{L^2} = \||\xi|\hat u\|_{L^2}$,  and in view of our system reformulation to the estimate
$$
     \| \mathcal E(t,0,\xi) \diag(|\xi|/\langle\xi\rangle,1)V_0 \| \lesssim \frac1{\lambda(t)} \|V_0\|
$$
for all $V_0\in \C^2$ and uniform in $\xi$. But this is just the combination of Corollaries~\ref{cor41a}, \ref{cor45} and~\ref{cor48}.
\end{proof}

The second result is an application of Banach-Steinhaus theorem on the (dense) subspace
of data for which $0$ does not belong to the Fourier support. It follows essentially from the fact
that the matrix $\mathcal Q(t,s,\xi)$ tends locally uniform to a  invertible matrix $\mathcal Q(\infty,s,\xi)$
inside the hyperbolic zone $Z_{hyp}(N)$. The proof is analogous to the corresponding one in \cite[Theorem 31]{Wirth06}, \cite[Corollary 3.2]{Wirth:2007a}.

\begin{thm}
For any fixed choice of data $u_1\in H^1(\R^n)$ and $u_2\in L^2(\R^n)$
we find constants $c$ and $C$ such that under the assumptions of Theorem~1 the solution to the
Cauchy problem satisfies
$$ c \le \lambda(t) \big(\|\nabla u(t,\cdot)\|_{L^2}+\|u_t(t,\cdot)\|_{L^2} \big)\le C. $$
\end{thm}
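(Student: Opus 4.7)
The upper bound $\lambda(t)\bigl(\|\nabla u(t,\cdot)\|_{L^2}+\|u_t(t,\cdot)\|_{L^2}\bigr)\le C$ is just Theorem~1 applied with $C$ depending on $\|u_1\|_{H^1}+\|u_2\|_{L^2}$. Only the matching lower bound needs an argument. Set $V_0(\xi)=(|\xi|\hat u_1(\xi),\hat u_2(\xi))$ and $V(t,\xi)=\mathcal E(t,0,\xi)V_0(\xi)$; by Plancherel it suffices to prove $\|V(t,\cdot)\|_{L^2}\gtrsim 1/\lambda(t)$ for each fixed $V_0\ne 0$, with constant depending on the datum. Since $V_0\in L^2\setminus\{0\}$ and $\int_{|\xi|\le\varepsilon}\|V_0(\xi)\|^2\d\xi\to 0$ as $\varepsilon\to 0$ by dominated convergence, I first fix an annulus $A=\{K_1\le|\xi|\le K_2\}$ with $K_1>0$ on which $V_0$ has positive $L^2$-mass, and then a time $t_0$ large enough that $\Theta(t_0)K_1\ge N$, so that $(t_0,\xi)\in Z_{hyp}(N)$ uniformly for $\xi\in A$.

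For $t\ge t_0$ I apply the factorisation of Lemma~\ref{lem44} to $\mathcal E(t,t_0,\xi)$. The key observation is that every factor is \emph{uniformly invertible} over $A$: the matrices $M$, $N_k(t_0,\xi)$ and $N_k^{-1}(t,\xi)$ are invertible by the choice of zone constant together with $N_k(t,\xi)\to I$, while $\mathcal Q_m(t,t_0,\xi)$ converges locally uniformly in $\xi$ to the invertible matrix $\mathcal Q_m(\infty,t_0,\xi)$ and is therefore uniformly invertible on the compact set $A$ for $t$ sufficiently large. The diagonal exponential $\exp(\i\int_{t_0}^t\mathcal D_m)$ has its two entries of equal modulus $\exp(-\int_{t_0}^t\Im\tau_m)\approx\lambda(t_0)/\lambda(t)$, because $\delta_m=\tau_m^+-\tau_m^-\in\R$ forces $\Im\tau_m^+=\Im\tau_m^-$. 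Combining these observations gives the pointwise bound
$$\|\mathcal E(t,t_0,\xi)W\|\gtrsim\frac{1}{\lambda(t)}\|W\|$$
uniformly in $\xi\in A$, $W\in\C^2$, and $t$ sufficiently large.

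Taking $W=V(t_0,\xi)$ and integrating in $\xi$ over $A$ yields
$$\|V(t,\cdot)\|_{L^2}^2\ge\int_A\|V(t,\xi)\|^2\d\xi\gtrsim\frac{1}{\lambda(t)^2}\int_A\|V(t_0,\xi)\|^2\d\xi,$$
and the integral on the right is strictly positive: $\mathcal E(t_0,0,\xi)$ is pointwise invertible as the fundamental matrix of a linear ODE, so $V(t_0,\xi)=0$ exactly where $V_0(\xi)=0$, and by the choice of $A$ the latter set has measure strictly less than $|A|$. For $t\in[0,t_0]$ the desired bound is immediate from continuity of the energy and well-posedness on a compact time interval. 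The main technical obstacle is verifying the uniform invertibility of $\mathcal Q_m(t,t_0,\xi)$ and of the $N_k$-products across $A$; this rests essentially on the \emph{locally uniform} nature of the convergences asserted in Lemma~\ref{lem44}, on the compactness of $A$, and on the zone-constant calibration from the diagonalisation scheme that keeps each $N_k(t,\xi)$ invertible throughout $Z_{hyp}(N)$.
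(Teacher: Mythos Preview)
Your argument is correct and takes a genuinely different, more elementary route than the paper's. The paper introduces an explicit comparison dynamics $\mathcal E_*(t,\xi)$ (a scalar multiple of a unitary matrix), shows that $\mathcal E_*^{-1}(t,\xi)\mathcal E(t,0,\xi)$ converges locally uniformly in $\xi$ to an invertible limit $\mathcal W(\xi)$, and then invokes the Banach--Steinhaus theorem on the dense subspaces $L^2_{|\xi|\ge c}$ to upgrade this to strong convergence on all of $L^2$; the lower bound falls out of $\|\lambda(t)\mathcal E_*(t,\D)\mathcal W(\D)V_0\|_{L^2}\approx 1$. You bypass all of this by restricting attention to a single annulus carrying positive mass of the datum and using the pointwise two-sided control of $\mathcal E(t,t_0,\xi)$ there. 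Your approach is shorter and avoids any soft functional analysis; the paper's approach, in exchange, actually produces the modified-scattering operator $\mathcal W(\D)$, which is of independent interest.

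One point deserves a word of justification: you assert that $\mathcal Q_m(\infty,t_0,\xi)$ is invertible, but Lemma~\ref{lem44} only records that $\mathcal Q_m(t,t_0,\xi)$ is uniformly bounded and converges. The cleanest way to fill this is Liouville's formula, exactly as in the paper's proof: $\det\mathcal E(t,t_0,\xi)=\exp(-2\int_{t_0}^t b)\approx\lambda(t_0)^2/\lambda(t)^2$, and comparing with the determinants of the other factors in the Lemma~\ref{lem44} factorisation forces $|\det\mathcal Q_m(t,t_0,\xi)|\approx 1$ uniformly, hence the same for the limit. Alternatively, observe that $\mathcal Q_m^{-1}$ solves $\D_t\mathcal Q_m^{-1}=-\mathcal Q_m^{-1}\tilde R_m$, so the same Peano--Baker bound gives $\|\mathcal Q_m^{-1}\|$ uniformly bounded. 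Either remark closes the gap, and with it your argument is complete. (Your handling of finite times by compactness is fine; just make sure the ``sufficiently large $t$'' threshold from the $\mathcal Q_m$-invertibility step is absorbed into the compact interval.)
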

\begin{proof} The proof consists of two parts. 

{\sl Part 1.}  We denote 
$$\mathcal E_*(t,\xi)=\begin{cases}
M^{-1}  \lambda^{-1}(t_\xi)\exp\left(\i \int_{t_\xi}^t \mathcal D_m(\theta,\xi)\d\theta\right)M,\qquad &t\ge t_\xi,\\
\lambda^{-1}(t)I,\qquad &t\le t_\xi,\end{cases}$$ 
where we used for convenience the abbreviation $t_\xi=t_\xi^{(2)}$. In a first step we show that the limit
$$ \mathcal W(\xi)=\lim_{t\to\infty} \mathcal E_*^{-1}(t,\xi) \mathcal E(t,0,\xi) $$
exists uniformly on $|\xi|\ge c$ for any given $c>0$ and defines an invertible matrix $\mathcal W(\xi)$ for $\xi\ne0$. Indeed, the representation of Lemma~\ref{lem44} shows that
$$   M^{-1}\exp\left(-\i \int_{t_\xi}^t \mathcal D_m(\theta,\xi)\d\theta\right)M \mathcal E(t,t_\xi,\xi)
   \to M^{-1} \mathcal Q_m(\infty,t_\xi,\xi) \left(\prod_{k=0}^{m-1} N_k(t_\xi,\xi)\right)M,
$$
where the difference between left and right hand side can be estimated by $((1+t)|\xi|)^{-1}$, which tends uniformly to zero on any set $|\xi|\ge c$ with $c>0$. Note, that the right hand side is independent of $c$ and belongs to $L^\infty(\R^n)$ and $\mathcal W(\xi)$ is obtained after multiplication by $\mathcal E(t_\xi,0,\xi)$.

Therefore, $\mathcal W(\xi)\in L^\infty(\R^n)$ is well-defined and it remains to check the invertibility. For this we apply Liouville theorem to our initial system. This gives
$$ \det\mathcal E(t,s,\xi) = \exp\left(\i\int_s^t \mathrm{tr}\, A(\tau,\xi)\d\tau\right) =\exp\left(-2\int_s^t b(\tau)\d\tau\right) \approx \frac{\lambda^2(s)}{\lambda^2(t)}, $$
while 
$$\left| \det  \exp\left(-\i \int_{t_\xi}^t \mathcal D_m(\theta,\xi)\d\theta\right) \right|=\exp\left(2\int_s^t \tau_m(\theta,\xi)\d\theta\right) \approx \frac{\lambda^2(t)}{\lambda^2(s)}. $$
Therefore, $|\det\mathcal W(\xi)|\approx 1$ and thus $\mathcal W(\xi)$ and $\mathcal W^{-1}(\xi)$ are both uniformly bounded.

Note, that the matrix $\mathcal E_*(t,\xi)$ is a scalar multiple of a unitary matrix, the factor essentially given by $\lambda^{-1}(t)$. 

{\sl Part 2.} We consider the dense subspace $$L^2_{|\xi|\ge c} = \{ f\in L^2\,:\, \mathrm{dist}(0, \mathrm{supp}\,\hat f)\ge c\}\subseteq L^2(\R^n;\C^2).$$ 
In Part 1 we have shown that $\mathcal E_*^{-1}(t,\xi)\mathcal E(t,0,\xi)\to\mathcal W(\xi)$ uniformly  in $|\xi|\ge c$. Therefore, on the operator-level 
$$ \mathcal E_*^{-1}(t,\D) \mathcal E(t,0,\D) \to \mathcal W(\D) $$
pointwise on $L^2_{|\xi|\ge c}$. Theorem~1 provides us with the norm-estimate 
$$\|\mathcal E(t,0,\D) \diag(|\D|/\langle\D\rangle,1)\|_{L^2\to L^2}\lesssim 1,$$ such that by Banach-Steinhaus theorem the strong convergence
$$ \slim_{t\to\infty} \mathcal E_*^{-1}(t,\D) \mathcal E(t,0,\D) \diag(|\D|/\langle\D\rangle,1)= \mathcal W(\D) \diag(|\D|/\langle\D\rangle,1)$$
follows on $L^2(\R^n;\C^2)$. 
Therefore, for {\em all} $V_0\in\diag(|\D|/\langle\D\rangle,1) L^2(\R^n,\C^2)$
\begin{align*} &\| \mathcal E_*^{-1}(t,\D)\mathcal E(t,0,\D)  V_0 - 
\mathcal W(\D)  V_0 \|_{L^2} \\
\approx& \| \lambda(t) \mathcal E(t,0,\D)  V_0 - \lambda(t) \mathcal E_*(t,\D)\mathcal W(\D) V_0
\|_{L^2} \to 0,
\end{align*}
while we already know that $\|\lambda(t) \mathcal E_*(t,\D)\mathcal W(\D) V_0\|_{L^2}\approx 1$. Therefore,
the first term satisfies two-sided bounds $\| \mathcal E(t,0,\D)  V_0\|_{L^2}\approx \lambda^{-1}(t)$ and
the theorem is proven.
\end{proof}

\begin{appendix}
\section{Elementary properties of stabilising functions} 
We collect some properties of stabilising functions. For $f\in L^1_{loc}(\R_+)$ and
$\alpha\in\R$ we write $f\rightsquigarrow\alpha$ if $\int_0^t |f(s)-\alpha|\d s = o(t)$. 

1. $\alpha$ is uniquely determined. Assume the condition is also satisfied for $\alpha'$, then
$$ t|\alpha-\alpha'| = \int_0^t |\alpha-\alpha'|\d s \le \int_0^t |f(s)-\alpha|\d s+\int_0^t |f(s)-\alpha'|\d s = o(t) $$
and the assertion follows.

2. The number $\alpha$ does not depend on the lower bound of the integral, the number $0$ is used for convenience only.

3. If $f_1\rightsquigarrow\alpha_1$ and $f_2\rightsquigarrow\alpha_2$ then
$f_1+cf_2\rightsquigarrow \alpha_1+c\alpha_2$.

4.  Stabilisation $f\rightsquigarrow\alpha$ does not imply convergence of $f(t)$, but if we additionally know that the limit exists, then it must be equal to $\alpha$.

5. If $f\rightsquigarrow \alpha$ and $g$ is monotone with $g'(t)$ bounded, then $f\circ g\rightsquigarrow \alpha$. This follows by substitution in the integral,
$$ \int_{g^{-1}(0)}^{g^{-1}(t)} |f(g(s'))-\alpha|\d s' =  \int_0^t |f(s)-\alpha| \d s= o(t) = o(g^{-1}(t)). $$

6. On the contrary, if $f\rightsquigarrow\alpha$ and $g$ is Lipschitz, then $g\circ f\rightsquigarrow g(\alpha)$. Indeed, the Lipschitz condition implies directly
$$ \int_0^t |g(f(s))-g(\alpha)|\d s\le L \int_0^t |f(s)-\alpha|\d s = o(t). $$
Note that it is sufficient to require that $g$ is locally Lipschitz if $f$ is bounded, i.e. we need the Lipschitz condition on the range of $f$. 
\end{appendix}

{\bf Acknowledgements.} These notes are based on discussions on the subject held when the second author visited the first one in April 2007. This research trip was supported by the German science foundation (DFG) as project 446 JAP 111/3/06.

\end{document}